\newcommand{\RR}{\mathbb{R}}
\mathchardef\mhyphen="2D 
\DeclareMathOperator*{\argmin}{\arg\!\min}
\DeclarePairedDelimiter{\norm}{\lVert}{\rVert}
\newtheorem{lemma}{Lemma}[section]
\crefname{equation}{}{}
\crefname{theorem}{Theorem}{Theorems}
\crefname{corollary}{Corollary}{Corollaries}
\crefname{example}{Example}{Examples}
\crefname{assumption}{Assumption}{Assumptions}
\crefname{lemma}{Lemma}{Lemmas}
\crefname{proposition}{Proposition}{Propositions}
\crefname{figure}{Figure}{Figures}
\crefname{table}{Table}{Tables}
\crefname{section}{Section}{Sections}
\crefname{appendix}{Appendix}{Appendices}
\Crefname{equation}{}{}
\Crefname{theorem}{Theorem}{Theorems}
\Crefname{corollary}{Corollary}{Corollaries}
\Crefname{example}{Example}{Examples}
\Crefname{lemma}{Lemma}{Lemma}
\Crefname{proposition}{Proposition}{Propositions}
\Crefname{figure}{Figure}{Figures}
\Crefname{table}{Table}{Tables}
\Crefname{section}{Section}{Sections}
\Crefname{appendix}{Appendix}{Appendices}
\titleformat{\subsubsection}[runin]{\normalfont\bfseries}{\thesubsubsection.}{10pt}{}
\let\originalleft\left
\let\originalright\right
\renewcommand{\left}{\mathopen{}\mathclose\bgroup\originalleft}
\renewcommand{\right}{\aftergroup\egroup\originalright}
\newcommand{\tr}{\mathsf{ T}}
\newcommand{\Dist}{\mathrm{dist}}
\newcommand{\xk}{x_k}
\newcommand{\xknext}{x_{k+1}}
\newcommand{\innerproduct}[2]{\left \langle #1, #2 \right\rangle }
\newcommand{\mup}{\mu_{\mathrm{p}}} 
\newcommand{\muq}{\mu_{\mathrm{q}}} 
\newcommand{\mue}{\mu_{\mathrm{e}}} 
\newcommand{\mus}{\mu_{\mathrm{s}}} 
\newcommand{\mur}{\mu_{\mathrm{r}}} 
\newcommand{\fpartial}{\hat{\partial}}
\newcommand{\prox}{\text{prox}}
\theoremstyle{thmstyleone}%
\newtheorem{theorem}{Theorem}[section]
\theoremstyle{thmstyletwo}%
\newtheorem{example}{Example}%
\newtheorem{remark}{Remark}%
\theoremstyle{thmstylethree}%
\newtheorem{definition}{Definition}%
\begin{document}

\title[Error Bounds, PL Condition, and Quadratic Growth for Weakly Convex Functions, and Linear Convergences of Proximal Point Methods]{Error Bounds, PL Condition, and Quadratic Growth for Weakly Convex Functions, and Linear Convergences of Proximal Point Methods\footnote{This paper  
is an extended version of the conference paper \cite{pmlr-v242-liao24a}.}}


\author[1]{\fnm{Feng-Yi} \sur{Liao}}\email{fliao@ucsd.edu}

\author[2]{\fnm{Lijun} \sur{Ding}}\email{l2ding@ucsd.edu}

\author[1]{\fnm{Yang} \sur{Zheng}}\email{zhengy@ucsd.edu}

\affil[1]{\orgdiv{Department of Electrical and Computer Engineering}, \orgname{University of California San Diego}, 
\orgaddress{
\state{California},
\country{United state}}}

\affil[2]{\orgdiv{Department of Mathematics}, \orgname{University of California San Diego}, \orgaddress{
\state{California}, 
\country{United state}}}



\abstract{
     
  Many practical optimization problems lack strong convexity. Fortunately, recent studies have revealed that first-order algorithms also enjoy linear convergences under various weaker regularity conditions. While the relationship among different conditions for convex and smooth functions is well-understood, it is not the case for the nonsmooth setting. In this paper, we go beyond convexity and smoothness, and clarify the connections among common regularity conditions in the class of weakly convex functions, including \textit{strong convexity}, \textit{restricted secant inequality}, \textit{subdifferential error bound}, \textit{Polyak-Łojasiewicz inequality}, and \textit{quadratic growth}.
    In addition, using these regularity conditions, we present a simple and modular proof for the linear convergence of {proximal point method} (PPM)  for convex and weakly convex optimization problems. The linear convergence also holds when the subproblems of PPM are solved inexactly with a proper control of inexactness.  
  }

\keywords{Error bound, Polyak-Łojasiewicz inequality, Quadratic growth, Proximal point method, Linear convergence, Weakly convex functions}



\maketitle

\section{Introduction}
With the success of machine learning, (sub)gradient-type algorithms, such as (sub)gradient descent, coordinate descent, Frank-Wolf algorithm, and other variants, have gained much attention over the past decades \cite{beck2017first,bottou2018optimization}. 
For smooth and/or convex cases, (sub)gradient methods are most well-understood. It is well-known that the basic gradient descent algorithm achieves linear convergence for minimizing smooth and strongly convex functions \cite{nesterov2018lectures}.    
However, strong convexity is often not satisfied in practice, and many fundamental models in machine learning lack this good property~\cite{agarwal2010fast}. 

This motivates the study of different regularity conditions that are weaker than strong convexity, such as Polyak-Łojasiewicz inequality \cite{polyak1963gradient}, restricted secant inequality \cite{zhang2013gradient}, subdifferential error bound \cite{drusvyatskiy2018error}, and quadratic growth \cite{necoara2019linear}. These regularity conditions go beyond strong convexity, and they mainly involve the relationship concerning the distance to the solution set, the cost value gap, and the length of the (sub)gradient. They often play a crucial step in many algorithm analyses. It has been shown that under those regularity conditions, a range of first-order optimization algorithms still enjoy linear convergence in many cases; see e.g.,  \cite{polyak1963gradient,karimi2016linear,cui2019r,drusvyatskiy2018error,diaz2023optimal,garber2023linear,liao2023overview,ding2023revisiting}. Moreover, these convergence analyses even become more elegant under suitable regularity conditions; examples include the gradient descent method for smooth optimization \cite[Theorem 2.1.15]{nesterov2018lectures}; the proximal gradient method for nonsmooth convex optimization \cite[Theorem 3.2]{drusvyatskiy2018error}; and the proximal point method (PPM) to solve 
 $ 0 \in T(x)$ where $T$ is a maximal monotone set value mapping \cite{leventhal2009metric}.

Recent studies have revealed the relationship between some regularity conditions under various assumptions on the smoothness and convexity of the objective function \cite{karimi2016linear,drusvyatskiy2018error,artacho2008characterization,bolte2017error,ye2021variational,zhu2023unified,drusvyatskiy2021nonsmooth,guille2021study,rebjock2023fast}. We postpone the comparison between different studies after we formally define regularity conditions (see \Cref{remark:connections}). Different conditions are useful to prove the convergence of various algorithms. For example,  the subdifferential error bound yields a clean proof for the proximal point method (PPM) \cite{rockafellar1976monotone,leventhal2009metric}; the step-length type error bound becomes useful for the proximal gradient method \cite{drusvyatskiy2018error}; Polyak-Łojasiewicz inequality leads to a clean proof of the gradient descent \cite{polyak1963gradient,karimi2016linear}; quadratic growth plays an important role in the analysis of proximal bundle methods \cite{diaz2023optimal} and spectral bundle methods \cite{ding2023revisiting,liao2023overview}. In some cases, it may be easier to establish one regularity condition than another one and then use the equivalence between the two to derive convergence results,  e.g. \cite[Proposition 3.3]{cui2016asymptotic}. Therefore, understanding the relationship or equivalence between common regularity conditions becomes crucial in algorithm analyses. 

Despite the results mentioned above on the equivalence \cite{karimi2016linear,drusvyatskiy2018error,artacho2008characterization,bolte2017error,ye2021variational,zhu2023unified,drusvyatskiy2021nonsmooth,guille2021study,rebjock2023fast}, their scope remains limited to one of the following scenarios: 1) smooth functions, or 2) nonsmooth but convex functions. While smooth and convex (but not strongly convex) problems cover a variety of applications, practical applications routinely deal with problems that lack both qualities (e.g., training nonsmooth and nonconvex deep networks). 
Recent studies have further identified another amenable problem
class: \textit{weakly convex functions} \cite{davis2019stochastic}. This class of functions includes all convex functions, $L$-smooth functions, certain compositions of convex functions with smooth functions, and many cost functions in modern machine learning \cite{drusvyatskiy2020subgradient,atenas2023unified}. Some algorithms for convex functions can also be extended to solve weakly convex functions, e.g., the classical bundle method \cite{liang2023proximal}, the stochastic projected subgradient method \cite{davis2019proximally}, and the damped proximal augmented Lagrangian method \cite{dahal2023damped}. A recent study also proposes {a unified analysis framework} to analyze the convergence of algorithms designed for weakly convex functions \cite{atenas2023unified}. In this paper, we aim to generalize the previous results on the equivalence of different regularity conditions \cite{karimi2016linear,drusvyatskiy2018error,artacho2008characterization,bolte2017error,ye2021variational,zhu2023unified,drusvyatskiy2021nonsmooth} from the class of smooth functions (and the class of convex functions) to the class of nonsmooth weakly convex functions. These findings will benefit algorithm analyses from various perspectives.

To demonstrate the equivalence between different regularity conditions, we revisit the classical (inexact) PPM, a conceptually simple algorithm that serves as a guideline for many algorithm developments \cite{drusvyatskiy2017proximal}. In particular, we provide a simple new proof for the linear convergence in both the convex and weakly convex settings using Polyak-Łojasiewicz inequality, subdifferential error bound, and quadratic growth. Our proof is partially inspired by a recent result on spectral bundle methods \cite{ding2023revisiting,liao2023overview} and the analysis in \cite{leventhal2009metric}. As a by-product of our investigation, we clarify some subtleties in inexact PPM in terms of the inexactness and feasibility, which are less emphasized in the literature. In this paper, our contributions are as follows.   
\begin{itemize}
    \item We first clarify the relationship among common regularity conditions, including \textit{strongly convex}, \textit{restricted secant inequality}, \textit{subdifferential error bound}, \textit{Polyak-Łojasiewicz inequality}, and \textit{quadratic growth}, in the class of weakly convex functions (\Cref{prop:QG-EB-PL}). Most of the proofs follow from some elementary analysis. The connection between  Polyak-Łojasiewicz inequality and subdifferential error bound
    utilizes the notion of slope in \cite[Section 2]{drusvyatskiy2021nonsmooth} and requires some sophisticated arguments.We also derive the coefficient {conversion} among different conditions (see \cref{table:equivalent-condtions}). 
    \item Building upon the equivalence for different regularity conditions, we present a simple and modular proof for linear convergence of the PPM for convex optimization \cite{rockafellar1976monotone} (\Cref{thm-linear}). The linear convergence also extends to the class of weakly convex functions with a proper initialization (\cref{thm:linear-weakly}). Our linear convergence result allows non-unique minimizers which is more general than a classical assumption of local Lipschitz continuity in \cite{rockafellar1976monotone} and also relaxes an upper Lipschitz condition in \cite{luque1984asymptotic}. 
    \item Finally, we clarify some subtleties in the inexact PPM regarding inexactness stopping criteria and feasibility (\cref{subsection:ippm-stopping-criteria}). We further establish the linear convergence of inexact PPM (\Cref{thm-linear-inexact}) under standard assumptions on the inexactness of the subproblem. Our proof is clean and modular, which combines a key inequality in \cite{luque1984asymptotic} with \Cref{thm-linear}. 
\end{itemize}
A preliminary summary of this work appeared in \cite{pmlr-v242-liao24a}. In this paper, we 1) complete the detailed proofs for all theorems and derive coefficient {conversion} among different conditions in \cref{table:equivalent-condtions}, 2) extend the linear convergence result of the PPM from optimization on convex functions to weakly convex functions (\cref{thm:linear-weakly}), and 3) provide extra technical discussions on inexact PPM (\cref{subsection:ippm-stopping-criteria}). The rest of this paper is structured as follows. \cref{section:Motivation-Preliminaries} summarizes the linear convergence of gradient descent as motivation and introduces some preliminaries in nonsmooth optimization techniques. \cref{section:equivalent-conditions} presents the relationship among different regularity conditions. \cref{section:sublinear-linear-PPM,section:inexact-PPM} focus on the (inexact) PPM and establish the sublinear and linear convergences. \cref{section:applications} presents three numerical experiments. \cref{section:conclusion} concludes this paper. Some auxiliary proofs and discussions are provided in \Cref{section:proof-inexact-PPM,appendix:background,section:proof-inexact-PPM,section:sublinear-PPM-proof,appendix:Section-2}. 

\vspace{1mm}

\noindent \textbf{Notation}.
We use $\RR^n$ to denote $n$-dimensional Euclidean space and $\overline{\RR}$ to denote the extended real line, i.e., $\overline{\RR}:= \RR \cup \{\pm\infty\}$. The notations $\innerproduct{\cdot}{\cdot}$ and $\|\cdot\|$ stand for standard inner product and $\ell_2$ norm in $\RR^n.$ For a closed set $S \subseteq \RR^n$, the distance of a point $x\in \RR^n$ to $S$ is defined as $\Dist(x,S) := \min_{y \in S} \|x - y\|$ and the projection of $x$ onto $S$ is denoted as $\Pi_{S}(x) = \argmin_{y\in S}\|x - y\|$. Given a function $f:\mathbb{R}^n \to \overline{\RR}$, the symbol $[f \leq \nu ]:= \{x \in \RR^n \mid f(x) \leq \nu \}$ denotes the $\nu$-sublevel set of $f$.
  
\section{Motivation and preliminaries}
    \label{section:Motivation-Preliminaries}
In this section, we first motivate the study of equivalent regularity conditions by revisiting the standard gradient descent algorithm, and then review some techniques in nonsmooth optimization. 
\subsection{Motivation: Linear convergence of gradient descent algorithms}
\label{subsection:motivation}
To motivate our discussion, consider a smooth convex optimization problem $\min_{x} \, f(x)$, 
where $f: \RR^n \to {\RR}$ is a convex and $L$-smooth function, i.e., its gradient is $L$-Lipschitz  satisfying
$
    \|\nabla f(x) - \nabla f(y)\| \leq L \|x  - y \|,\; \forall x,$ $ y \in \RR^n.
$
Let $S := \argmin_{x}\, f(x)$ be the set of optimal solutions. Assume $S \neq \emptyset$ and denote $f^\star = \min_{x} f(x)$. The standard \textit{gradient descent} (GD) follows the update 
\begin{equation} \label{eq:GD_iterates}
    \xknext = \xk - t_k \nabla f(\xk),
\end{equation}
where $t_k >0$ is the step size. A textbook result says that when choosing a constant step size $t_k = \frac{1}{L}$,  the GD iterates converge to $f^\star$ with a \textit{sublinear} rate (precisely, $f(x_k) -f^\star \leq L\Dist^2 (x_0, S)/(2k)$); see e.g., \cite[Corollary 2.1.2]{nesterov2018lectures}. If the function $f$ is strongly convex, GD achieves a global linear convergence \cite[Theorem 2.1.15]{nesterov2018lectures}.

However, the assumption of strong convexity is often not satisfied in practice. It is known that some alternative weaker assumptions are sufficient for linear convergences. We here introduce two notions: \textit{restricted secant inequality} (RSI) \cite{zhang2013gradient}, and \textit{Polyak-Łojasiewicz} (PL) inequality \cite{polyak1963gradient}. A differentiable function $f$ satisfies RSI if there exists $\mur > 0$ such that  
\begin{equation}
    \begin{aligned}
        \label{eq:RSI-smooth}
       \innerproduct{\nabla f(x)}{x - {x}^\star } \geq \mur \|x - {x}^\star \|^2 = \mur \cdot \Dist^2(x,S), \forall x \in \RR^n, {x}^\star \in \Pi_{S}(x),
    \end{aligned}
\end{equation}
and it satisfies the PL inequality if there exists $\mup > 0$ such that
\begin{equation}
    \begin{aligned}
        \label{eq:PL-smooth}
     \|\nabla f(x)\|^2 \geq 2\mup ( f(x) - f^\star), \; \forall x \in \RR^n.
    \end{aligned}
\end{equation}

Note that both RSI \cref{eq:RSI-smooth} and PL \cref{eq:PL-smooth} imply that any stationary point of $f$ is a global minimum. However, they do not imply the uniqueness of stationary points or the convexity of the function. One can think that RSI \cref{eq:RSI-smooth} (resp. PL \cref{eq:PL-smooth}) requires that the gradient $\nabla f(x)$ grows faster than that of quadratic functions when moving away from the solution set $S$ (resp. the optimal value $f^\star$). Linear convergence
of GD under the PL inequality was first proved in \cite{polyak1963gradient}, and linear convergence under RSI was discussed in \cite[Proposition 1]{guille2022gradient} and \cite[Proposition 1]{zhang2020new}. We summarize a simple version below.

\begin{theorem}[Linear convergence of GD]\label{proposition:GD-linear-convergence}
Consider the optimization problem $\min_{x \in \RR^n} f(x),$ where $f$ is an $L$-smooth function. Suppose its solution set $S$ is nonempty. If RSI \cref{eq:RSI-smooth} holds with $\mur > 0$ and PL inequality \cref{eq:PL-smooth} holds with $\mup > 0$, then the GD algorithm \cref{eq:GD_iterates} with a constant stepsize $t_k = \frac{\mur}{L^2}$ has global linear convergence for iterates and function values, i.e.,
\begin{subequations}
    \begin{align}
         \Dist(\xknext,S) &\leq \omega_1 \cdot  \Dist(\xk,S), \label{eq:linear-convergence-GD-iterates} \\ 
         f(\xknext) -f^\star & \leq \omega_2 \cdot (f(\xk) -f^\star), \label{eq:linear-convergence-GD-costs}
        \end{align}
\end{subequations}
with $\omega_1 = \sqrt{1-\mur^2/L^2} \in (0,1)$ and $\omega_2 = (L^3 - 2\mur L \mup + \mur^2 \mup)/L^3 \in (0,1)$.
\end{theorem}
 
Thanks to RSI \cref{eq:RSI-smooth} and PL inequality \cref{eq:PL-smooth}, the proof of \cref{proposition:GD-linear-convergence} is very elegant, taking only a few lines. For self-completeness, we provide a simple proof and some additional discussions in \cref{appendix:proof-GD}. In particular, the RSI \cref{eq:RSI-smooth} leads to a quick proof of \cref{eq:linear-convergence-GD-iterates}, and the PL \cref{eq:PL-smooth} allows for a simple proof of \cref{eq:linear-convergence-GD-costs}. It is known that for $L$-smooth convex functions, the two conditions RSI \cref{eq:RSI-smooth} and PL \cref{eq:PL-smooth} are equivalent (cf. \cite[Theorem 2]{karimi2016linear}). Some recent studies, such as \cite{bolte2017error,necoara2019linear,zhang2017restricted}, have explored the relationships among different regularity conditions for linear convergences. A nice summary appeared in \cite[Theorem 2]{karimi2016linear}, but it only works in the context of $L$-smooth functions. 

In this paper, we aim to characterize the relationships among different regularity conditions for \textit{nonsmooth and nonconvex} functions (\Cref{section:equivalent-conditions}), and apply them to derive \textit{simple and clean} proofs for linear convergences of (inexact) proximal point methods for convex and weakly convex (possibly nonsmooth) optimization (\Cref{section:sublinear-linear-PPM,section:inexact-PPM}). Before that, we next introduce some key techniques for analyzing nonsmooth problems. 

\subsection{The notion of slope and Ekeland's variational principle}

In this subsection, we review three important techniques in nonsmooth optimization, 1) the notion of slope, 2) a key distance estimation result, and~3)~Ekeland's variational principle, which will play an important role in our proofs~later. 

We first introduce the Fr\'echet subdifferential. Let $f:\RR^n \to \overline{\RR}$ be a proper closed function. We define the Fr\'echet subdifferential (see e.g., \cite[Page 27]{li2020understanding}): 
$$
\fpartial f(x) := \left \{ s \in \RR^n \mid \liminf_{y \to x }  \frac{ f(y) - f(x) -  \innerproduct{s}{y-x}}{\|y-x\|} \geq 0 \right\}.
$$ 
If $f$ is convex, the Fr\'echet subdifferential $\fpartial f$ is the same as the usual convex subdifferential, i.e., 
$$\fpartial f(x) = \partial f(x) :=  \{s \in \RR^n \mid f(y) \geq f(x) + \innerproduct{s}{y-x}, \forall y \in \RR^n \}, \;\; \forall x \in \RR^n,$$ and if $f$ is differentiable, Fr\'echet subdifferential reduces to the usual gradient, i.e., $ \fpartial f(x) = \{\nabla f(x)\}, \forall x \in \RR^n$. 

\begin{definition}[Slope {\cite[Section 2]{drusvyatskiy2021nonsmooth}}]
\label{definition:slope}
Consider a closed function $f: \RR^n \to \overline{\RR}$ and a point $\bar{x}$ with $f(\bar{x})$ finite. The \textit{slope} of $f$ at $\bar{x}$ is its maximal instantaneous rate of decrease:
\begin{align*}
    |\nabla f|(\bar{x}) := \limsup_{x \to \bar{x}} \frac{ (f(\bar{x})- f(x))^{+}} {\|x - \bar{x}\|},
\end{align*}
where we use the notation $r^{+} = \max\{0,r\}$.
\end{definition}
If $f$ is smooth, the slope $|\nabla f|(\bar{x})$ is simply the norm of its gradient $\|\nabla f(\bar{x})\|$. If $f(x)$ is convex, the slope $|\nabla f|(\bar{x})$ is equivalent to the length of the minimal norm element in the subdifferential, i.e., $|\nabla f|(\bar{x})=\Dist(0, \partial f(\bar{x}))$. If it is $\rho$-weakly convex (i.e., $f + \frac{\rho}{2}\|\cdot\|^2$ is convex), the slope $|\nabla f|(\bar{x})$ is the same as the length of the minimal norm element in the Fr\'echet subdifferential, i.e., $|\nabla f|(\bar{x}) = \Dist(0, \fpartial f(\bar{x}))$ (see 
\cite{ioffe2017variational,drusvyatskiy2013slope} for more details). 
We summarize these useful properties into a lemma below; a brief proof is presented in \Cref{appendix:proof-lemma-2.1} for self-completeness:

\begin{lemma}
    \label{lemma:slope-subdifferential}
    Consider a closed function $f: \RR^n \to \overline{\RR}$ and a point $\bar{x} \in \RR^n$ with $f(\bar{x})$ finite. The following statements for the slope hold.
    \begin{enumerate}
        \item If $f$ is smooth, then $|\nabla f|(\bar{x}) =  \|\nabla f(\bar{x})\|$; 
        \item If $f$ is convex, then $|\nabla f|(\bar{x}) = \Dist(0,\partial f(\bar{x}))$;
        \item If $f$ is $\rho$-weakly convex, then $|\nabla f|(\bar{x}) = \Dist(0,\fpartial f(\bar{x}))$.
    \end{enumerate}
\end{lemma}

We then introduce a key result from \cite[Lemma 2.5]{drusvyatskiy2015curves} that will serve as a crucial step to estimate the distance to the optimal solution set.
\begin{lemma}[Lemma 2.5 \cite{drusvyatskiy2015curves}]
    \label{thm:basic-lemma}
    Let $f: \RR^n \to \overline{\RR}$ be a proper closed function. Suppose for some point $x \in \text{dom}(f)$, there are constants $\alpha < f(x)$ and $r>0, K > 0$ such that $f(x) - \alpha < Kr$ and
    \begin{align*}
        |\nabla f|(u) \geq r, \qquad  \forall u \in [\alpha < f \leq f(x)], \|u-x\| \leq K.
    \end{align*}
    Then the sublevel set $[f\leq \alpha]$ is nonempty and $\Dist(x,[f\leq \alpha]) \leq (f(x)-\alpha)/r.$
\end{lemma}
Note that if $\alpha = f^\star = \inf_{x \in \RR^n} f(x)$ in \cref{thm:basic-lemma}, we can upper bound the distance to the solution set $\Dist(x, [f= f^\star])$ by $(f(x) - f^\star)/r$.

Finally, we review Ekeland's variational principle \cite{ekeland1974variational}. 
\begin{theorem}[\textbf{Ekeland's variational principle} \cite{ekeland1974variational}]
    \label{theorem:Ekeland}
    Let $f: \RR^n \to \overline{\RR} $ be a proper closed function. Suppose $\epsilon > 0$ and $z \in [f \leq  \inf_{x\in \RR^n} f(x) + \epsilon]$.
    Then for any $\rho > 0$ there exists $y \in \RR^n$ such that 
     \begin{subequations}
         \begin{align}
             \|z-y\| &\leq \epsilon/\rho, \label{eq:Ekeland-1} \\
             f(y)  &\leq f(z), \label{eq:Ekeland-2}\\
              f(x) + \rho \|x-y\|  &> f(y), \;\;\forall x \in \RR^n/\{y\}.\label{eq:Ekeland-3}
         \end{align}
     \end{subequations}
\end{theorem}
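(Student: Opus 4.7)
The plan is to construct the point $y$ as the limit of a carefully chosen Cauchy sequence and then identify properties \cref{eq:Ekeland-2} and \cref{eq:Ekeland-3} as the defining features of a minimal element under an auxiliary partial order. First I would introduce the relation $x \preceq u \Leftrightarrow f(x) + \rho \cdot \Dist(x,u) \leq f(u)$ on $\mathrm{dom}\, f$. Reflexivity is trivial, antisymmetry follows from nonnegativity of the metric, and the triangle inequality combined with the scalar $\rho$ delivers transitivity, so $\preceq$ is a genuine partial order. In this language, property \cref{eq:Ekeland-3} is exactly the statement that $y$ is minimal (no $x \neq y$ satisfies $x \preceq y$), while \cref{eq:Ekeland-2} is the same as $y \preceq z$.

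Next I would build a nested descending sequence $\{z_n\}$ by setting $z_0 := z$ and, given $z_n$, defining $C_n := \{x \in \RR^n : x \preceq z_n\}$. Each $C_n$ is nonempty (it contains $z_n$) and closed because $f$ is lower semicontinuous. I then select $z_{n+1} \in C_n$ with $f(z_{n+1}) \leq \inf_{C_n} f + 2^{-n}$. Since $z_{n+1} \preceq z_n$, the sets nest as $C_{n+1} \subseteq C_n \subseteq \cdots \subseteq C_0$. For $m > n$, transitivity gives $z_m \preceq z_n$, hence $\rho \cdot \Dist(z_m, z_n) \leq f(z_n) - f(z_m)$. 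Because $\inf_{C_{n-1}} f$ is nondecreasing and bounded above by $f(z_n)$, the right-hand side vanishes, so $\{z_n\}$ is Cauchy and converges to some $y$. Closedness of each $C_n$ then forces $y \in \bigcap_n C_n$; in particular $y \preceq z$, which yields both \cref{eq:Ekeland-2} and the estimate $\rho \cdot \Dist(z,y) \leq f(z) - f(y) \leq f(z) - \inf f \leq \epsilon$, giving \cref{eq:Ekeland-1}.

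The main obstacle is verifying the minimality assertion \cref{eq:Ekeland-3}, which is exactly the step that forces the infimum construction in the sequence. Suppose, for contradiction, that some $x \neq y$ satisfies $x \preceq y$. Transitivity with $y \preceq z_n$ gives $x \preceq z_n$, so $x \in C_n$ for every $n$; the defining inequality for $z_{n+1}$ then yields $f(z_{n+1}) \leq f(x) + 2^{-n}$. Lower semicontinuity at $y$ produces $f(y) \leq \liminf_n f(z_{n+1}) \leq f(x)$, and combining with $f(x) + \rho \cdot \Dist(x,y) \leq f(y)$ forces $\rho \cdot \Dist(x,y) \leq 0$, contradicting $x \neq y$. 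The delicate bookkeeping is keeping the tolerance sequence both summable (so Cauchy-ness is automatic from the telescoping sum of $f(z_n) - f(z_{n+1})$) and vanishing in the limit (so the minimality argument goes through), which the geometric choice $2^{-n}$ handles cleanly; everything else is a direct consequence of the partial order.
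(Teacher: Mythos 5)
Your proof is correct, but note that the paper does not prove this statement at all: it is quoted verbatim as a classical tool from \cite{ekeland1974variational} and then only its consequence \cref{eq:bound-slope} is used, so there is no in-paper argument to compare against. What you have written is a correct, self-contained rendition of the standard Bishop--Phelps-type proof: the order $x \preceq u \Leftrightarrow f(x) + \rho\,\Dist(x,u) \leq f(u)$ is indeed a partial order on $\mathrm{dom}\,f$ (antisymmetry and the cancellations require finiteness of $f$ there, which holds since $f$ is proper), the sets $C_n$ are closed by lower semicontinuity of $f(\cdot) + \rho\,\Dist(\cdot,z_n)$, and the minimality argument via $f(z_{n+1}) \leq \inf_{C_n} f + 2^{-n}$ and lower semicontinuity at $y$ is the right way to get \cref{eq:Ekeland-3}. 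Three small points worth tightening. First, the statement implicitly assumes $\inf f > -\infty$ (otherwise the sublevel set $[f \leq \inf f + \epsilon]$ is empty and the claim is vacuous); you use this both for the lower bound on $\{f(z_n)\}$ and in the estimate $f(z) - \inf f \leq \epsilon$, so it should be said once. Second, \cref{eq:Ekeland-2} is \emph{implied by}, not ``the same as,'' $y \preceq z$; your construction delivers the stronger relation, which is fine. Third, the Cauchy step is cleaner if made quantitative: for $m > n$ one has $z_m \in C_n$, hence $f(z_m) \geq \inf_{C_n} f \geq f(z_{n+1}) - 2^{-n}$, so $\rho\,\Dist(z_m, z_{n+1}) \leq f(z_{n+1}) - f(z_m) \leq 2^{-n}$; this avoids the slightly vague appeal to monotonicity of $\inf_{C_{n-1}} f$ (alternatively, monotone convergence of the bounded nonincreasing sequence $\{f(z_n)\}$ suffices, and summability of the tolerances is not actually needed -- only that they tend to zero). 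Finally, for $x \notin \mathrm{dom}\,f$ the inequality in \cref{eq:Ekeland-3} holds trivially since $f(y)$ is finite, so restricting the order to $\mathrm{dom}\,f$ loses nothing.
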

Note that \cref{eq:Ekeland-1} means that the distance between $y$ and $z$ is bounded, \cref{eq:Ekeland-2} tells us that $y$ is also in the sublevel set, and \cref{eq:Ekeland-3} shows that $y$ uniquely minimizes the function $f+ \rho \|\cdot -y\|$. 

From the definition of slope (\Cref{definition:slope}), one immediate consequence of \cref{theorem:Ekeland} is that the slope of $y$ is also bounded by $\rho$ as
\begin{equation}
    \label{eq:bound-slope}
    \begin{aligned}
        |\nabla f|(y) = \limsup_{x \to y} \frac{ (f(y)- f(x))^{+}} {\|x-y\|} \leq \limsup_{x \to y} \frac{ \rho \cdot \Dist(x,y) } {\|x-y\|} = \rho.
    \end{aligned}
\end{equation}
\Cref{thm:basic-lemma,theorem:Ekeland} will play an important role in our analysis of different regularity conditions in \cref{prop:QG-EB-PL}.

\section{Relationships between regularity conditions}
\label{section:equivalent-conditions}
In this section, we aim to show the equivalence between different regularity conditions in the class of \textit{weakly convex (potentially nonsmooth)} functions. A function $f: \RR^n \to \overline{\RR}$ is called $\rho$-weakly convex if the function $f + \frac{\rho}{2}\|\cdot\|^2$ is convex. The class of weakly convex functions is very~rich: it includes all convex functions, $L$-smooth functions, certain compositions of convex functions with smooth functions, and many cost functions in modern machine learning applications; we refer interested readers to \cite{drusvyatskiy2020subgradient,atenas2023unified} for more details. 
\subsection{Regularity conditions and their relationship}

Let $f:\RR^n \to \overline{\RR}$ be a proper, closed, $\rho$-weakly convex function. Let $S$ be the optimal solution set of $f$, i.e., $S = \argmin_{x \in \RR^n} f(x)$, and we assume $S \neq \emptyset$. Let $f^\star = \min_{x \in \RR^n} f(x)$ and $\nu > 0$. We consider the following five regularity conditions:
\begin{enumerate}
\setlength{\itemsep}{0pt}
    \item \textbf{Local Strong Convexity (SC)}: there exists a positive constant $\mus > 0$ such that 
    \begin{equation} 
    \begin{aligned}
            \label{eq:SC}
        f(x) + \langle g, y - x \rangle + \frac{\mus}{2}
        \|y - x\|^2 & \leq f(y), \quad \forall x, y \in [f \leq f^\star + \nu] , g \in \fpartial f(x). 
    \end{aligned}
     \tag{SC}
    \end{equation}

    \item  \textbf{Restricted Secant Inequality (RSI)}: there exists a positive constant $\mur > 0$ such that
    \begin{equation}
    \begin{aligned}
        \label{eq:RSI}
        \mur \cdot  \Dist^2(x,S) &  \leq \innerproduct{g}{x- \hat{x}} , \quad \forall x \in [f  \leq  f^\star + \nu], g \in \fpartial f(x), \hat{x} \in \Pi_{S}(x).
        \end{aligned}
        \tag{RSI}
    \end{equation}
    
    \item \textbf{Error bound (EB)}\footnote{Error bound is closely related to the notion of \textit{metric subregularity} at an optimal point $x^\star$ for 
    $0 \in \fpartial f(x^\star)$ \cite[Definition 2.3]{artacho2008characterization}: there exist a constant $\alpha>0$ and a set $\mathcal{U}$ containing $x^\star$ such that $\Dist(x,(\fpartial f)^{-1}(0)) \leq \alpha \cdot \Dist(0,\fpartial f(x)), \forall x \in \mathcal{U}$.}: 
    there exists a constant $\mue >0$ such that
    \begin{equation}
        \label{eq:EB}
         \Dist (x,S) \leq \mue  \cdot  \Dist(0,\fpartial f(x)), \quad \forall x \in [f \leq f^\star + \nu].
         \tag{EB}
    \end{equation}

    \item \textbf{Polyak-Łojasiewicz (PL) inequality}\footnote{To be consistent with the smooth case in \cite{karimi2016linear}, we call the property \cref{eq:PL} Polyak-Łojasiewicz, which is usually used for smooth functions. The property \cref{eq:PL} is actually a special case of the \textit{Kurdyka-Łojasiewicz inequality} $\varphi^\prime (f(x)-f^\star)\Dist(0,\fpartial f(x))\geq 1 $ with  $\varphi (s) = c s^{1/2}$ and $c > 0$.}: there exists a constant $\mup > 0$ such that 
    \begin{equation}\label{eq:PL}
    2\mup \cdot (f(x) - f^\star) \leq \Dist^2(0,\fpartial f(x)),\quad \forall x \in [ f \leq f^\star + \nu]. 
    \tag{PL}
    \end{equation}

    \item \textbf{Quadratic Growth (QG)}: there exists a constant $\muq > 0$ such that
    \begin{equation}
        \label{eq:QG}
         \frac{\muq}{2} \cdot \Dist^2(x,S) \leq f(x) - f^\star, \quad \forall x \in [f \leq f^\star + \nu].
        \tag{QG}
    \end{equation}
   \end{enumerate}
All the regularity conditions above are defined over a sublevel set $[f \leq f^\star + \nu]$. If~$v = +\infty$, then they are global. In particular, \cref{eq:SC} imposes a quadratic lower bound at every point in the sublevel set. 
On the other hand, \cref{eq:RSI}, \cref{eq:EB} and \cref{eq:PL} all require a certain growth of the minimal norm element in $\fpartial f(x)$ when moving away from its solution set $S$ or optimal value $f^\star$. It is easy to see that \cref{eq:RSI}, \cref{eq:EB}  and \cref{eq:PL} all imply that every stationary point $0 \in \fpartial f(x)$ in the sublevel set $[f\leq f^\star + \nu]$ is a global minimum (but they do not imply the uniqueness of stationary points). Finally, \cref{eq:QG} shows that $f(x)$ grows at least quadratically when moving away from the solution set $S$. Unlike \cref{eq:RSI}, \cref{eq:EB}  and \cref{eq:PL}, this \cref{eq:QG} alone allows suboptimal stationary points. 

Our first technical result summarizes the relationships among the five regularity conditions. 
\begin{theorem}
    \label{prop:QG-EB-PL}
    Let $f:\RR^n \to \overline{\RR}$ be a proper closed $\rho$-weakly convex function with $f^\star = \min_{x \in \RR^n}f(x)$ and $S = \argmin_{x \in \RR^n}f(x)$. Suppose $S \neq \emptyset$ and let $\nu >0$ be the same constant throughout \cref{eq:SC}, \cref{eq:RSI}, \cref{eq:EB}, \cref{eq:PL}, and \cref{eq:QG}. The following relationship holds 
   \begin{equation} \label{eq:relationship-weakly-convex}
    \cref{eq:SC} 
    \rightarrow  \cref{eq:RSI} \rightarrow
    \cref{eq:EB}   \equiv \cref{eq:PL} \rightarrow \cref{eq:QG}.
   \end{equation}
    Furthermore, if the \cref{eq:QG} coefficient satisfies $\muq > \rho$ (including the case that $f$ is convex),   
   then the following equivalence holds
   \begin{equation} \label{eq:relationship-convex}
        \cref{eq:RSI}\equiv \cref{eq:EB}   \equiv \cref{eq:PL}  \equiv \cref{eq:QG}.
   \end{equation}
\end{theorem}

\begin{table}[t]
\setlength{\abovecaptionskip}{0pt}
\setlength{\belowcaptionskip}{0pt}
    \caption{Constant conversion for \cref{eq:RSI}, \cref{eq:EB}, \cref{eq:PL} and \cref{eq:QG}, under the condition $\muq > \rho$ in \cref{prop:QG-EB-PL}. Each row represents the corresponding coefficients for other conditions while holding one condition constant. For example, the second row shows the coefficients for \cref{eq:EB}, \cref{eq:PL}, and \cref{eq:QG} assuming that  \cref{eq:RSI} holds with a positive constant $\mur$.  }
    \label{table:equivalent-condtions}
    \renewcommand*{\arraystretch}{1.75}
  {\centering
    \begin{tabular}{|c|c|c|c|c|}
   \hline
    &\cref{eq:RSI} & \cref{eq:EB}& \cref{eq:PL}  & \cref{eq:QG}\\
     \hline
    \cref{eq:RSI} & $\mur$ & $[\frac{1}{\mur},+\infty)$ & $(0,\frac{\mur^2}{2\mur + \rho}]$ & $(0,\frac{\mur}{2}]$\\
    \hline
     \cref{eq:EB} & $(0,\frac{1-2\mue \rho}{4 \mue}]$ & $\mue$ & $(0,\frac{1}{2\mue + \rho \mue^2}]$  & $(0,\frac{1}{2\mue}]$ \\
     \hline
    \cref{eq:PL} & $(0,\frac{\mup - 2 \rho}{4}]$ & $[\frac{1}{\mup},+\infty)$& $\mup$ & $(0,\frac{\mup}{2}]$ \\
    \hline
    \cref{eq:QG} & $(0,\frac{\muq - \rho}{2}]$  & $[\frac{2}{\muq - \rho},+\infty)$& $(0,\frac{(\muq - \rho)^2}{4 \muq}]$ & $\muq$\\
   \hline
    \end{tabular}
    }
\end{table}

Note that if a function $f$ is convex and satisfies \cref{eq:QG} with $\muq > 0$, then \cref{eq:relationship-convex} holds naturally as $f$ is naturally $0$-weakly convex.~\cref{prop:QG-EB-PL} includes \cite[Theorem 2]{karimi2016linear} as a special case, in which only $L$-smooth functions are considered. It is easy to see that all~$L$-smooth functions are also $L$-weakly convex. 
Even for differentiable functions, \cref{prop:QG-EB-PL} is more general than \cite[Theorem 2]{karimi2016linear} in the sense that 1) we require no Lipschitz constant $L$ for gradients to ensure the equivalency among \cref{eq:EB}, \cref{eq:PL}, and \cref{eq:QG} in the convex case; 2) the condition $\muq > \rho$ is new and does not mean that $f$ is convex. We will provide further illustrative examples in  \cref{subsection:examples}. Under the condition $\muq > \rho$, we can further infer the coefficients for different regularity conditions from each other; the details are listed in \cref{table:equivalent-condtions}.

The proof of \cref{prop:QG-EB-PL} relies heavily on the notion of \textit{slope} in \Cref{definition:slope}, Ekeland’s variational principle in \cref{theorem:Ekeland}, and the technical result in \cref{thm:basic-lemma}. We will present the proof details in \Cref{subsection:proof-equivalence}. 
Alternative proofs based on \textit{subgradient flows} \cite{bolte2017error} are also possible. Indeed, one key step in the proof of \cite[Theorem 2]{karimi2016linear} is based on \textit{gradient flows} for smooth functions, which is a special case of \textit{subgradient flows} for nonsmooth cases.

\begin{remark} \label{remark:connections}
The regularity conditions \cref{eq:EB}, \cref{eq:QG} and \cref{eq:PL} have been discussed for different function classes in the literature. For the smooth case, we refer to \cite[Theorem 2]{karimi2016linear} for a nice summary; also see \cite{rebjock2023fast,guille2021study} for related discussions. For nonsmooth convex functions, the equivalence between \cref{eq:EB} and \cref{eq:QG} has been recognized in \cite[Theorem 3.3]{drusvyatskiy2018error} and \cite[Theorem 3.3]{artacho2008characterization}, and the equivalence between \cref{eq:PL} and \cref{eq:QG} is established in \cite[Theorem 5]{bolte2017error}. Thus, \cref{eq:EB}, \cref{eq:PL}, and \cref{eq:QG} are equivalent for the class of nonsmooth convex~functions \cite[Proposition 2]{ye2021variational}; see also \cite{zhu2023unified} for a recent discussion. 
Our \cref{prop:QG-EB-PL} extends these results to $\rho$-weakly convex functions. The most closely related work is \cite{drusvyatskiy2021nonsmooth} which focuses on nonsmooth optimization using Taylor-like models. Indeed, we specialize the proof of \cite[Theorem 3.7, Proposition 3.8, and Corollary 5.7]{drusvyatskiy2021nonsmooth} into our setting and prove the relationship in \cref{eq:relationship-convex,eq:relationship-weakly-convex} directly using the slope technique. 
We note that the implication from \cref{eq:QG} to \cref{eq:EB}/\cref{eq:PL} is not true in general. Yet, with the condition $\muq > \rho$ in \cref{prop:QG-EB-PL}, all four regularity conditions \cref{eq:RSI}, \cref{eq:EB},  \cref{eq:PL} and \cref{eq:QG} are equivalent.
\hfill $\square$
\end{remark}

\subsection{Examples}
\label{subsection:examples}

In principle, all the five properties in \cref{prop:QG-EB-PL} are generalizations of quadratic functions to non-quadratic, nonconvex, and even nonsmooth cases. For illustration, let us first consider the simplest quadratic function $f(x) = x^2$, which is convex and differentiable. It is clear that  $\fpartial f(x) = \{2x\}$ and $S = \{0\}$. It is also immediate to verify that \cref{eq:SC} holds with $0 < \mus \leq 2$, \cref{eq:RSI} holds with $0 < \mur \leq  2$, \cref{eq:EB} holds with $\mue  \geq  1/2$, \cref{eq:PL} holds with $0<\mup \leq 2$, and \cref{eq:QG} holds with $0<\muq \leq 2$. 
Consider another simple convex function $f(x) = x^2, \text{if}~|x|\leq 1,$ and $f(x) =\frac{1}{2}x^4 + \frac{1}{2}$ otherwise. All the five properties hold for this function, but it is not $L$-smooth globally. 

Let us move away from convex functions, and consider $f(x) = x^2 + 6 \sin^2(x)$. It is clear that this function satisfies \cref{eq:QG} globally, however, there exist suboptimal stationary points and consequently \cref{eq:EB} and \cref{eq:PL} do not hold globally. Thus, in this case (with $\nu = +\infty$), the relationship \cref{eq:relationship-weakly-convex} is strict, and \cref{eq:QG} is more general than the other conditions. 

Finally, we consider a $\rho$-weakly convex function with a QG constant satisfying $\muq > \rho$ in the following example. \Cref{prop:QG-EB-PL} confirms that this function will also satisfy \Cref{eq:RSI}, \Cref{eq:PL}, and \Cref{eq:EB}.  
\begin{example}\label{example:weakly-convex}
     Consider the function 
    \begin{equation*}
        \begin{aligned}
            f(x) = \begin{cases}
             -x^2+1 & \text{if}~ -1  <  x < -0.5, \\
             3(x+1)^2 & \text{otherwise}.
        \end{cases}
        \end{aligned}
    \end{equation*}
    It can be verified that the function is not convex (due to the part $-1<x<-0.5$) but $2$-weakly convex with a unique minimizer $x^\star = -1$ and the minimum value $f^\star = 0$. See \Cref{figure:weakly-convex-muq-greater-rho} for an illustration of these properties. The Fr\'echet subdifferential of this function can be calculated as 
    \begin{equation*}
        \begin{aligned}
            \fpartial f(x) = \begin{cases}
             -2x & \text{if}~ -1  <  x < -0.5, \\
             6(x+1) & \text{if}~ x < -1~\text{or}~x>-0.5, \\
             [0,2]    & \text{if}~x = -1, \\
             [1,3]    & \text{if}~x = -0.5.
             \end{cases} 
        \end{aligned}
    \end{equation*}
    The length of the minimal element is then
    \begin{equation*}
        \begin{aligned}
            \Dist(0,\fpartial f(x)) = \begin{cases}
             |2x| & \text{if}~-1<x<-0.5, \\
             |6(x+1)| & \text{if}~ x < -1~\text{or}~x>-0.5, \\
             0    & \text{if}~x = -1, \\
             1    & \text{if}~x = -0.5.
             \end{cases}
        \end{aligned}
    \end{equation*}
    As expected from \Cref{prop:QG-EB-PL}, we can see that \cref{eq:QG} holds for $0< \muq\leq 6$,  \cref{eq:EB} holds for $\mue \geq 1/2$, and \cref{eq:PL} holds for $0 < \mup \leq 2/3$. These can also be observed in \cref{figure:weakly-convex-muq-greater-rho}.
\end{example}

\begin{figure}[t]
    \centering
    \setlength{\abovecaptionskip}{0pt}
    \includegraphics[width=\textwidth]{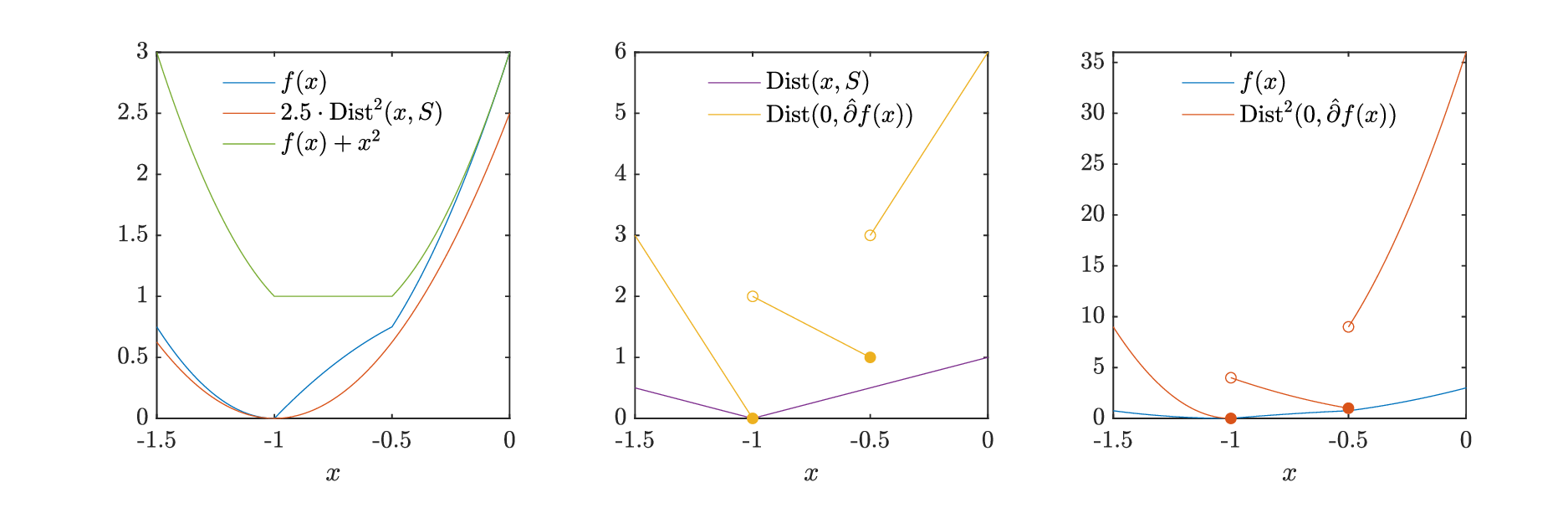}
    \caption{A nonconvex function with $f^\star = 0$ that satisfies the equivalency $\cref{eq:RSI}\equiv \cref{eq:EB} \equiv \cref{eq:PL} \equiv \cref{eq:QG}$. Left: the function $f$ (blue line) is $2$-weakly convex (confirmed by the green line), and also satisfies \cref{eq:QG} with $\muq = 5$ (confirmed by the red line);  Middle: $f$ satisfies \cref{eq:EB} as $\Dist(0, \fpartial f(x))$ (yellow curve) is lower bounded by $\Dist(x,S)$ (purple curve). Right: $f$ satisfies \cref{eq:PL} as $\Dist^2(0,\fpartial f(x))$ (red curve) is lower bounded by $f(x)$ (blue curve).}
    \label{figure:weakly-convex-muq-greater-rho}
\end{figure}

\subsection{Proof of \Cref{prop:QG-EB-PL}} \label{subsection:proof-equivalence}
The directions \cref{eq:SC} $\Rightarrow$ \cref{eq:RSI}, \cref{eq:RSI} $\Rightarrow$ \cref{eq:EB}, \cref{eq:EB} $\Rightarrow$   \cref{eq:PL}, and \cref{eq:QG} with  $\muq > \rho$ $\Rightarrow$ \cref{eq:RSI} follow easily from some elementary algebras. The directions \cref{eq:PL} $\Rightarrow$ \cref{eq:EB} and \cref{eq:EB} $\Rightarrow$ \cref{eq:PL} require the equivalence in \cref{lemma:slope-subdifferential}. Most directions are not difficult to prove, except that the direction \cref{eq:PL} $\Rightarrow$ \cref{eq:EB} requires some sophisticated arguments involving \cref{thm:basic-lemma,theorem:Ekeland}. To start the proof, we first recall that $S$ denotes the set of minimizers of $f$ and $f^\star = \min_{x\in \RR^n }f(x)$ and we assume $S$ is nonempty. 
\begin{itemize}
    \item \cref{eq:SC} $\Rightarrow$ \cref{eq:RSI}: Suppose $f$ satisfies \cref{eq:SC} with constant $\mus>0$. For any $x \in [f\leq f^\star + \nu]$ and $g \in \fpartial f(x)$, we let $\hat{x} \in \Pi_{S}(x)$. It follows that 
    \begin{equation*}
        \begin{aligned}
            & f(x) + \innerproduct{g}{\hat{x}-x} + \frac{\mus}{2} \cdot \Dist^2(x,S) \leq f(\hat{x}) \\
           \Longrightarrow ~ & \frac{\mus}{2} \cdot \Dist^2(x,S) \leq f(\hat{x}) - f(x) + \innerproduct{g}{x-\hat{x}} \\
           &\qquad \qquad \qquad \; \leq \innerproduct{g}{x-\hat{x}},
        \end{aligned}
    \end{equation*}
    where the last inequality comes from the fact that $f(\hat{x}) - f(x) \leq 0$.
    
    \item \cref{eq:RSI} $\Rightarrow$ \cref{eq:EB}: Suppose $f$ satisfies \cref{eq:RSI} with constant $\mur > 0$. Let $x \in [f\leq f^\star + \nu]$, $g$ be the minimal norm element in $\fpartial f(x)$, and $\hat{x} \in \Pi_{S}(x)$. Then, by definition of \cref{eq:RSI}, we have 
    \begin{equation*}
        \begin{aligned}
            \innerproduct{g}{x - \hat{x}} \geq \mur \cdot \Dist^2(x,S).
        \end{aligned}
    \end{equation*}
    Applying Cauchy-Schwarz on the left side yields 
    \begin{equation*}
        \begin{aligned}
            \Dist(0,\fpartial f (x))  \geq \mur \cdot \Dist(x,S).
        \end{aligned}
    \end{equation*}

     \item  \cref{eq:EB} $\Rightarrow$   \cref{eq:PL}: Suppose $f$ satisfies \cref{eq:EB} with constant $\mue>0$. Let $x \in [f \leq f^\star + \nu]$, $\hat{x} \in \Pi_{S}(x)$, and $g$ be the minimal norm element in $\fpartial f(x)$. From the subdifferential property for $\rho$-weakly convex function (i.e. \cref{eq:subdifferential-inequality-weakly-convex} in \cref{appendix:background}), we have 
    \begin{align*}
        f^\star \geq f(x) + \innerproduct{g}{\hat{x} - x} - \frac{\rho}{2}\|\hat{x} - x\|^2.
    \end{align*}
    Using the triangle inequality, it follows that
    \begin{align*}
        f(x) - f^\star & \leq \Dist(0,\fpartial f(x))\Dist(x,S) + \frac{\rho}{2}\Dist^2(x,S) \\
        & \leq \mue \cdot \Dist^2(0,\fpartial f(x)) + \frac{\rho \mue^2}{2}\Dist^2(0,\fpartial f(x)) \\
        & = \left(\frac{2 \mue +\rho \mue^2}{2} \right) \Dist^2(0,\fpartial f(x)).
    \end{align*}
    Dividing both sides by $(2 \mue +\rho \mue^2)/2$ gets the desired constant. 
     \item {\cref{eq:PL} $\Rightarrow$ \cref{eq:EB}:  
     Suppose $f$ satisfies \cref{eq:PL} with constant $\mup > 0$. 
    As $f$ is $\rho$-weakly convex, applying \cref{lemma:slope-subdifferential} on \cref{eq:PL} shows that 
    \begin{equation} 
        \label{eq:slop-PL-f}
        |\nabla f|^2(x) \geq 2\mup \cdot (f(x) - f^\star), \quad \forall x \in [f\leq f^\star + \nu].
    \end{equation}
    Define the function $g$ as $x \mapsto g(x) = (f(x) - f^\star)^{1/2}$. We claim that 
    \begin{equation}
        \label{eq: g_slope_inequality-preview}
        |\nabla g|(x)\geq \sqrt{\frac{\mup}{2}}, \; \forall x  \in [0<g \leq \sqrt{\nu}].
    \end{equation}
With \cref{eq: g_slope_inequality-preview}, we can establish the implication \cref{eq:PL} $\Rightarrow$ \cref{eq:EB} as follows. Fix a point $x \in [ 0 < g \leq \sqrt{\nu}]$ and choose $K > g(x) \sqrt{\frac{2}{\mup}}$. We have $g(x)< K \sqrt{\frac{\mup}{2}}$ and 
    \begin{equation*}
    \begin{aligned}
        |\nabla g|(u) & \geq    \sqrt{\frac{\mup}{2}}, \quad \forall u \in [0 < g \leq g(x)], \|x-u\| \leq K.
    \end{aligned}
    \end{equation*}
    Choosing $r = \sqrt{\frac{\mup}{2}}, \alpha = 0$ in \cref{thm:basic-lemma}, we have the following bounds 
    \begin{equation*}
        \begin{aligned} 
            \Dist(x,S) & = \Dist(x,[g\leq 0]) \\ & \leq \sqrt{\frac{2}{\mup}}g(x) = \sqrt{\frac{2}{\mup}}(f(x)-f^\star)^{1/2} \\
       & \leq \frac{1}{\mup} |\nabla f|(x) = \frac{1}{\mup} \Dist(0,\fpartial f(x)), \forall x\in [f \leq f^\star + \nu],
        \end{aligned}
    \end{equation*}
    where the first inequality is from \cref{thm:basic-lemma}, the second inequality comes from \Cref{eq:slop-PL-f}, and the third equality applies \cref{lemma:slope-subdifferential}.
    This shows the implication \cref{eq:PL} $\Rightarrow$ \cref{eq:EB}.

    \textbf{Proof of the claim \cref{eq: g_slope_inequality-preview}:} 
    Fix any $\Bar{x} \in [0<g \leq \sqrt{\nu}]$. We shall show 
\begin{equation}\label{eq: g_slope_inequality}
    |\nabla g|(\bar{x})\geq \sqrt{\frac{\mup}{2}}.
\end{equation}
Note that if $|\nabla g|(\bar{x}) = +\infty$, then \eqref{eq: g_slope_inequality} holds trivially. Hence, we only prove \eqref{eq: g_slope_inequality} for the case $|\nabla g|(\bar{x}) < +\infty$. By the definition of the slope $|\nabla f|(\bar{x}) = \lim \sup_{y\rightarrow \bar{x}} \frac{(f(\bar{x})-f(y))^+}{\norm{y-\bar{x}}}$ and by our choice of $\bar{x}$ satisfying $|\nabla f|(\bar{x})\geq 
2\mu_p(f(\bar{x})-f^\star)>0$, we claim that there exists a sequence $\{y_i\}$ satisfying the following conditions:
\begin{equation}\label{eq: nabla_g_bound_claim}
\{y_i\}\rightarrow \bar{x}, \quad 
\{f(y_i)\}\uparrow f(\bar{x}), 
\quad |\nabla f|(\bar{x}) = \lim_{i \to  +\infty} \frac{f(\bar{x})-f(y_i)}{\norm{y_i-\bar{x}}},
\end{equation}
where $\{f(y_i) \}\uparrow f(\bar{x})$ means $f(y_i)\leq f(\bar{x})$ for all $i$ and $\{f(y_i)\}\rightarrow f(\bar{x})$. If \cref{eq: nabla_g_bound_claim} holds, then \eqref{eq: g_slope_inequality} can be readily established using basic calculus. Indeed, using the Taylor expansion of the function $x\mapsto \sqrt{x}$ near $f(\bar{x})-f^\star$, we have 
\begin{align*}
|\nabla g|(\bar{x}) & = \limsup_{y\to \bar{x}} \frac{ \left ((f(\bar{x}) - f^\star)^{\frac{1}{2}} - (f(y) - f^*)^{\frac{1}{2}} \right )^+}{\|y - \bar{x}\|}\\
& \geq 
\limsup_{i\rightarrow +\infty} \frac{(f(\bar{x}) - f^*)^{\frac{1}{2}}-(f(y_{i}) - f^*)^\frac{1}{2}}{\norm{y_{i}-\bar{x}}}\\
& = \limsup_{i\rightarrow +\infty} \frac{
\frac{1}{2 (f(\bar{x}) - f^*)^{\frac{1}{2}}}(f(\bar{x})-f(y_i)) + o(|f(y_i)-f(\bar{x})|) 
}{\norm{y_{i}-\bar{x}}}\\
& \overset{(a)}{=} 
\frac{|\nabla f|(\bar{x})}{
2 (f(\bar{x}) - f^*)^{\frac{1}{2}}} \\
& \overset{(b)}{\geq} \sqrt{\frac{\mup}{2}},
\end{align*}
where $(a)$ is due to \eqref{eq: nabla_g_bound_claim}, and $(b)$ applies \eqref{eq:slop-PL-f}. It remains to prove the claim \cref{eq: nabla_g_bound_claim}. 

\textbf{Proof of the claim \cref{eq: nabla_g_bound_claim}:} The conditions $\{y_i\}\rightarrow \bar{x}$, $f(y_i) \leq f(\bar{x}) $ for all $i$, and $|\nabla f|(\bar{x}) = \lim_{i \to  +\infty} \frac{f(\bar{x})-f(y_i)}{\norm{y_i-\bar{x}}}$ follow from the definition of $|\nabla f |(\bar{x})$ and the assumption  $|\nabla f |(\bar{x}) > 0$. Below, we further establish $\{f(y_i) \}\uparrow f(\bar{x})$. Suppose for contradiction that the sequence $\{f(y_i) \}$ does not converge to $f(\bar{x})$. Then, let $\epsilon = \frac{1}{2} (f(\bar{x}) -  f^\star) > 0$, there exists a subsequence $\{f(y_{i_k})\}_k$ such that $ \epsilon \leq f(\bar{x}) - f(y_{i_k})$. However, this subsequence will result in
\begin{align*}
    |\nabla g(\bar{x})| &\geq \limsup_{k\rightarrow +\infty} \frac{ (f(\bar{x}) - f^*)^{\frac{1}{2}}-(f(y_{i_k}) - f^*)^\frac{1}{2}}{\norm{y_{i_k}-\bar{x}}} \\ & \geq \limsup_{k\rightarrow +\infty}     
\frac{(f(\bar{x}) - f^*)^{\frac{1}{2}}-(f(\bar{x}) - f^*-\epsilon)^{\frac{1}{2}} }{\norm{y_{i_k}-\bar{x}}} \\
& = \limsup_{k\rightarrow +\infty}     
\frac{(f(\bar{x}) - f^*)^{\frac{1}{2}}-( \frac{1}{2}(f(\bar{x}) - f^*))^{\frac{1}{2}} }{\norm{y_{i_k}-\bar{x}}} \\
& = +\infty,
\end{align*}
violating the assumption $|\nabla g|(\bar{x}) < +\infty$. This proves the claim \cref{eq: nabla_g_bound_claim}. 
}

    \item \cref{eq:EB} $\Rightarrow$ \cref{eq:QG}: 
    Suppose $f$ satisfies \cref{eq:EB} with $\mue > 0$. Fixing any $x \in [f^\star<f\leq f^\star + \nu]$, we have $x \in [f^\star<f \leq f^\star + \epsilon]$ with $\epsilon = f(x) - f^\star$. Choosing $\rho = \sqrt{\epsilon/\mue}$, \Cref{theorem:Ekeland,eq:bound-slope} ensure that there exists a $y$ such that $y \in [f\leq f^\star+ \epsilon]$ and $|\nabla f|(y) \leq \rho$.
    Thus,
    \begin{align*}
         \mue \rho \geq \mue \cdot |\nabla f|(y)& \overset{(a)}{\geq} \Dist(y,S)\\
         &\overset{(b)}{\geq} \Dist(x,S) - \Dist(x,y) \\
         &\overset{(c)}{\geq} \Dist(x,S) - \epsilon/\rho,
    \end{align*}
    where $(a)$ uses \cref{lemma:slope-subdifferential} (i.e., we have $|\nabla f|(y) = \Dist(0,\fpartial f(y))$ for weakly convex functions) and \cref{eq:EB}, $(b)$ applies the triangle inequality for a point to a set, and $(c)$ comes from \cref{eq:Ekeland-1}.
    Substituting $\epsilon = f(x) - f^\star$ and $\rho = \sqrt{\frac{\epsilon}{\mue}}$ and rearranging the above inequality yields 
    \begin{align*}
        2\sqrt{\mue} (f(x) - f^\star)^{1/2} \geq \Dist(x,S). 
    \end{align*}
    Squaring both sides completes the proof of \cref{eq:EB} $\Rightarrow$ \cref{eq:QG}.
\end{itemize}

Summarizing the relationship above leads to the first part of \Cref{prop:QG-EB-PL} in \Cref{eq:relationship-weakly-convex}. We now prove the second part in \Cref{eq:relationship-convex}. 

\begin{itemize}
    \item \cref{eq:QG} with  $\muq > \rho$ $\Rightarrow$ \cref{eq:RSI}: Let $x  \in [f \leq f^\star + \nu]$, $\hat{x} \in \Pi_{S}(x)$, and $ g \in \fpartial f(x)$. From the assumption of \cref{eq:QG} and the property of subgradient of weakly convex function in \cref{eq:subdifferential-inequality-weakly-convex}, we have 
    \begin{align*}
        \frac{\muq}{2} \cdot \Dist^2(x,S) \leq  f(x) - f^\star & \leq   \innerproduct{g}{x - \hat{x} } + \frac{\rho}{2} \Dist^2(x,S).
    \end{align*}
    Rearranging terms yields
    \begin{equation*}
        \begin{aligned}
            \left (\frac{\muq - \rho}{2} \right)  \cdot \Dist^2(x,S) \leq \innerproduct{g}{x - \hat{x}}.
        \end{aligned}
    \end{equation*}
    \end{itemize}
This completes our proof of \Cref{prop:QG-EB-PL}.

\section{Proximal point method for convex optimization}
\label{section:sublinear-linear-PPM}

In this section, we will utilize the regularity conditions in \Cref{section:equivalent-conditions} to derive fast linear convergence guarantees of the classical proximal point method (PPM) \cite{rockafellar1976augmented} for convex (potentially nonsmooth) optimization and generalize it to weakly convex settings. PPM is a conceptually simple algorithm, which has been historically used for guiding algorithm design and analysis, such as proximal bundle methods \cite{lemarechal1981bundle} and augmented Lagrangian methods \cite{rockafellar1976augmented}. It has recently found increasing applications in modern machine learning; see \cite{drusvyatskiy2017proximal}. 

\subsection{Proximal point method}

Consider the optimization problem  \vspace{-1mm}
\begin{equation}
\label{eq:general-minimization}
    f^\star = \min_x \; f(x), \vspace{-1mm}
\end{equation}
where $f: \RR^n \to \overline{\RR}$ is a proper closed convex function.  Note that \cref{eq:general-minimization} is also~an~abstract model for constrained optimization, since given a closed convex set $X$, we can define $\bar{f}(x) = f(x)$ if $x \in X$, and $\bar{f}(x) = +\infty$ otherwise. Let $S = \argmin_x \, f(x)$.  
We define the \textit{proximal mapping} as
\begin{equation} \label{eq:PPM-subproblem}
    \text{prox}_{\alpha f}(\xk):=\argmin_{x \in \mathbb{R}^n}\; f(x) + \frac{1}{2\alpha} \left\|x - \xk\right\|^2,
\end{equation}
where $\alpha > 0$. 
Starting with any initial point $x_0$, the PPM generates a sequence of points as follows
\begin{equation}
    \label{eq:PPM}
     \xknext =   \mathrm{prox}_{c_k f}(\xk), \quad k = 0, 1, 2, \ldots
\end{equation}
where $\{c_k\}_{k\geq 0}$ is a sequence of positive real numbers. The quadratic term in \cref{eq:PPM-subproblem} makes the objective function strongly convex and always admits a unique solution. The iterates \cref{eq:PPM} are thus well-defined. 

The convergence of PPM \cref{eq:PPM} for (nonsmooth) convex optimization has been studied since the 1970s \cite{rockafellar1976monotone}. The sublinear convergence is relatively easy to establish, and many different assumptions exist for linear convergences of \cref{eq:PPM}; see \cite{rockafellar1976monotone,luque1984asymptotic,leventhal2009metric,cui2016asymptotic,drusvyatskiy2018error}. However, as we will highlight later, some assumptions are restrictive and the corresponding proofs are sophisticated and nontransparent. We aim to provide simple proofs under the general regularity conditions in \Cref{section:equivalent-conditions}.   

\subsection{(Sub)linear convergences of PPM} \label{subsection:sublinear-exact}

Under a very general setup, the PPM \cref{eq:PPM} converges at a sublinear rate for cost value gaps, and the iterates converge asymptotically, as summarized in \cref{theo:PPM-sublinear-convergence}. This result is classical \cite[Theorem 2.1]{guler1991convergence}, and a new bound with an improved constant 4 is also available in \cite[Theorem 4.1]{taylor2017exact} using the performance estimation technique. 

\begin{theorem}[Sublinear convergence {\cite[Theorem 2.1]{guler1991convergence}}] \label{theo:PPM-sublinear-convergence}
     Let $f\!:\! \RR^n \!\to \! \overline{\RR}$ be a proper closed convex function with 
     $S = \argmin_{x \in \RR^n}f(x)$ and $S\neq \emptyset$. Then, the iterates \cref{eq:PPM} with a positive sequence $\{c_k\}_{k\geq 0}$~satisfy
    \begin{equation} \label{eq:PPM-sublinear-convergence-cost}
        f(\xk) - f^\star \leq  \Dist^2(x_0,S)/(2\textstyle \sum_{t=0}^{k-1}c_t).
    \end{equation} 
    If we further have $\lim_{k \to \infty }\sum_{t=0}^{k-1}c_t = \infty$, then the iterates converge to an optimal solution $ \bar{x} $ asymptotically, i.e., $\lim_{k \to \infty} x_k = \bar{x}$, where $\bar{x} \in S$.   
\end{theorem}

The proof of \cref{eq:PPM-sublinear-convergence-cost} is immediate from a telescope sum via the following one-step improvement:
\begin{equation} 
       2c_k( f(\xknext)-  f(x^\star)) \leq   \|\xk - x^\star \|^2 - \|\xknext - x^\star \|^2, \quad \forall c_k >0, x^\star \in S. \label{eq:PPM-distant-nonincreasing}
\end{equation}
This fact is not difficult to establish. Note that choosing any constant step size $c_k = c > 0$ in \cref{eq:PPM-sublinear-convergence-cost} directly implies the common sublinear rate $\mathcal{O}(1/k)$. In \cref{theo:PPM-sublinear-convergence}, $f$ does not need to be $L$-smooth, and it can also be non-differentiable. Thus, the guarantees in \cref{theo:PPM-sublinear-convergence} are much stronger than those by (sub)gradient methods. This is because the proximal mapping \cref{eq:PPM-subproblem} is a stronger oracle than simple (sub)gradient updates. 

Similar to GD in \cref{section:Motivation-Preliminaries},
when the function $f$ satisfies additional regularity conditions, the PPM enjoys linear convergence. Our next main technical result establishes linear convergences of the PPM under the common regularity conditions in \cref{prop:QG-EB-PL}. 

\begin{theorem}[Linear convergence]
    \label{thm-linear}
    Let $f\!:\! \RR^n \!\to \! \overline{\RR}$ be a proper closed convex function with
    $S = \argmin_{x \in \RR^n}f(x)$, $S \neq \emptyset$,~and~$\nu > 0$. 
    Suppose $f$ satisfies \cref{eq:PL} (or  \cref{eq:EB}, \cref{eq:RSI}, \cref{eq:QG}) over the sublevel set $[f \leq f^\star + \nu]$. 
    Then, for all $k \geq k_0 =  \frac{\Dist^2(x_0,S)}{2\nu \inf_{k\geq 0}c_k }$ steps, the iterates \cref{eq:PPM} with a positive sequence $\{c_k\}_{k\geq 0}$ bounded away from zero enjoy linear convergence rates, i.e., 
    \begin{subequations}
    \begin{align}
        f(\xknext) - f^\star & \leq  \omega_k \cdot (f(\xk) - f^\star), \label{eq:cost-gap-linear}\\ 
        \Dist(\xknext,S) & \leq \theta_k \cdot \Dist(\xk,S), \label{eq:iterate-linear}
    \end{align}
    \end{subequations}
    where the constants are  
    \begin{equation*}     
      \omega_k = \frac{1}{1+ c_k \mup}  < 1, \quad \theta_{k} =\min \left \{ \frac{ 1}{\sqrt{c_k\mu_{\mathrm{q}} +1} } , \frac{1}{\sqrt{c_k^2 /\mue^2+ 1}} \right \}< 1.
    \end{equation*}
\end{theorem}

\begin{proof} 
    The sublinear convergence in \cref{theo:PPM-sublinear-convergence} ensures that the iterate $x_k$ reaches $[f\leq f^\star + \nu]$ after at most $k_0$ iterations. Once $\xk$ is within  $[f\leq f^\star + \nu]$, all the properties \cref{eq:EB}, \cref{eq:PL}, \cref{eq:RSI}, and \cref{eq:QG} are equivalent by \cref{prop:QG-EB-PL}, and the next point $\xknext$ also satisfies $\xknext \in  [f\leq f^\star + \nu]$ as $f(\xknext) \leq  f(\xknext) + \frac{1}{2c_k}\|\xknext - \xk\|^2 \leq f(\xk) $ by the update rule \cref{eq:PPM}. For the analysis below, we assume $\xk \in  [f\leq f^\star + \nu]$. 
    
    We next show that \cref{eq:PL} gives a simple proof of \cref{eq:cost-gap-linear}, and \cref{eq:QG} together with \cref{eq:EB} leads to a clean proof of \cref{eq:iterate-linear}. Recall that the optimality condition of \cref{eq:PPM} directly implies 
\begin{equation}
    - (\xknext - \xk)/c_k  \in \partial f(\xknext).\label{eq:optimality-condition}
\end{equation}  
    Then, the following inequalities hold 
    \begin{equation}
        \begin{aligned} \label{eq:PL-cost-gap}
         f(\xk) - f(\xknext) &\overset{(a)}{\geq} \|\xknext - \xk\|^2 /(2c_k)  \\
        &\overset{(b)}{\geq} \frac{c_k}{2} \Dist^2(0,\partial f(\xknext)) 
         \overset{(c)}{\geq} c_k \mup   (f(\xknext) - f^\star),
    \end{aligned}
    \end{equation}
    where $(a)$ applies the fact that $\xk$ is a suboptimal solution to \cref{eq:PPM}, $(b)$ comes from the optimality \cref{eq:optimality-condition}, and $(c)$ applies \cref{eq:PL}.
    Re-arranging and subtracting $f^*$ from both sides of \cref{eq:PL-cost-gap} lead to the desired linear convergence in \cref{eq:cost-gap-linear}. 
    
   We next use \cref{eq:QG} to prove \cref{eq:iterate-linear} with coefficient $\theta_k \leq 1/\sqrt{c_k\mu_{\mathrm{q}} +1}$. 
   By definition, we have $f(\Pi_{S}(\xk)) = f^\star$ and $\|  \Pi_{S}(\xk) - \xk\|^2 = \Dist^2(\xk,S)$. Since $f + \frac{1}{2c_k} \|\cdot- \xk\|^2$ is  $1/c_k$ strongly convex, its first-order lower bound at $\xknext$ is 
   \begin{equation} 
   \label{eq:first-order-lower-bound}
   \begin{aligned}
       & f^\star +\frac{1}{2c_k} \|  \Pi_{S}(\xk) - \xk\|^2  = f( \Pi_{S}(\xk) )  +\frac{1}{2c_k} \Dist^2(\xk,S) \\
       \geq ~&  f(\xknext) + \frac{1}{2c_k} \|\xknext- \xk\|^2 + \frac{1}{2c_k} \|\Pi_{S}(\xk) -\xknext \|^2, 
   \end{aligned}
   \end{equation}
where we also applied the fact that  $\xknext$ minimizes \cref{eq:PPM} so 0 is a subgradient. From \cref{eq:first-order-lower-bound}, we drop the positive term $\|\xknext- \xk\|^2$ and use the fact that $\|\Pi_{S}(\xk) -\xknext \| \geq \Dist(\xknext,S)$, leading to 
\begin{equation*}
        f^\star -  f(\xknext) + \Dist^2(\xk,S)/(2c_k) \geq  \Dist^2(\xknext,S)/(2c_k). 
\end{equation*} 
Combining this inequality with \cref{eq:QG} and simple re-arranging terms leads to the desired linear rate 
\begin{equation*}
\Dist^2(\xknext,S) \leq 1/\sqrt{c_k\muq + 1} \cdot \Dist^2(\xk,S).
\end{equation*}
Simple arguments based on \cref{eq:EB} can establish \cref{eq:iterate-linear} with coefficient $\theta_k \leq 1/(\mue^2/c_k^2 + 1)^{1/2}$. We provide some details in \cref{proof-thm-linear}.
\end{proof}

Two nice features of \Cref{thm-linear} are 1) the simplicity of its proofs and 2) the generality of its conditions. Indeed, the proof of \cref{eq:cost-gap-linear} is simple via \cref{eq:PL}, and the proof of \cref{eq:iterate-linear} is clean via  \cref{eq:QG} and \cref{eq:EB}, which are simpler than typical proofs. In addition, the regularity conditions are weaker than \cite{rockafellar1976monotone} and \cite{luque1984asymptotic}. Linear convergence for $\Dist(\xk,S)$ was first established in \cite{rockafellar1976monotone} with one restrictive assumption: the inverse of the subdifferential $(\partial f)^{-1}$ is locally Lipschitz at $0$, which requires a unique optimal solution, i.e.,~$S$~is a singleton.  The uniqueness assumption is lifted in \cite{luque1984asymptotic}, which even allows an unbounded solution set. More recently, this assumption is further relaxed to $\partial f$ being metrically subregular in \cite{cui2016asymptotic} and \cite{leventhal2009metric}.  
It is known that for convex functions, $\partial f$ is metrically subregular if and only if $f$ satisfies quadratic growth (cf. \cref{prop:QG-EB-PL}). Indeed, our proof in \cref{thm-linear} is based on quadratic growth, which is a more intuitive geometrical
property. Our main idea in the proof above is partially inspired by a recent result for the linear convergence of the spectral bundle method in \cite{ding2023revisiting,liao2023overview}.

We note that in the proof of \cref{thm-linear}, once the iterate is within the \cref{eq:PL}/\cref{eq:QG} region, the proof idea follows simply on the existence of the optimality condition \cref{eq:optimality-condition}. Building upon this observation, we extend \cref{thm-linear} to the class of weakly convex functions when a proper initialization is given.
\begin{theorem}
[Extension to weakly convex functions]
    \label{thm:linear-weakly} 
     Let $f\!:\! \RR^n \!\to \! \overline{\RR}$ be a proper closed $\rho$-weakly convex function with 
     $S = \argmin_{x \in \RR^n}f(x)$ and $S\neq \emptyset$. Suppose $f$ satisfies 
    \cref{eq:QG} with $\muq > \rho$ over the sublevel set $[f \leq f^\star + \nu]$ with $\nu > 0$. Let $\beta = \muq - \rho > 0$. The iterates generated by the PPM \cref{eq:PPM} with any positive sequence $\{c_k\}_{k \geq 0}$ satisfying $1/c_k > \rho $ for all  $k \geq 0$ and an initialization $x_0 \in [f\leq f^\star + \nu]$ satisfy 
    \begin{subequations}
    \begin{align*}
        f(\xknext) - f^\star & \leq  \omega_k \cdot (f(\xk) - f^\star), \\ 
        \Dist(\xknext,S) & \leq \theta_k \cdot \Dist(\xk,S), 
    \end{align*}
    \end{subequations}
    where the constants are
    \begin{equation*}
        \begin{aligned}
            \omega_k = \frac{1}{1+\mu_{\mathrm{p}} c_k }  < 1~\text{and}~\theta_k = \min \left \{ \frac{ 1}{\sqrt{ c_k\beta +1} } , \frac{1}{\sqrt{c_k^2/\mue^2 + 1}} \right \}< 1,
        \end{aligned}
    \end{equation*}
with $\mu_{\mathrm{p}}$ and $\mue$ being any valid constants for \Cref{eq:PL} and \Cref{eq:EB} from \Cref{table:equivalent-condtions}, respectively.
\end{theorem}
\begin{proof}
We first note that choosing $\{c_k\}_{k \geq 0}$ such that $1/c_k > \rho, \forall k \geq 0 $, guarantees the existence of the  optimality condition  $- (\xknext - \xk)/c_k  \in \fpartial f(\xknext)$
similar to \cref{eq:optimality-condition}. With the existence of the optimality condition, the proof follows similar arguments in 
the proof of \cref{thm-linear}. Specifically, the proof of the linear decrease in the cost value gap is identical to that in \cref{thm-linear}. As $\muq > \rho$, \cref{eq:EB}, \cref{eq:RSI}, and \cref{eq:PL} hold by \cref{prop:QG-EB-PL}. For the proof of the linear decrease in the distance, the coefficient $(c_k^2/\mue^2 + 1)^{-1/2}$ follows the same argument as in the convex case since $c_k$ is chosen such that $1/c_k > \rho $ so the function $f + \frac{1}{2c_k}\| \cdot - \xk \|^2 $ is a convex function plus a quadratic term and the proximal mapping satisfied the key inequality in \cref{lemma:firmly-monotone-conseuqence} in \cref{proof-thm-linear}; For the coefficient $(c_k\beta +1)^{-1/2} $, the argument is essentially the same as the convex case with a small modification that the function $f + \frac{1}{2c_k}\| \cdot - \xk \|^2 $ is $(\frac{1}{c_k} - \rho)$-strongly convex. 
\end{proof}

Note that \cref{thm:linear-weakly} requires the initial point to be within the sublevel set that satisfies \cref{eq:QG} and $\muq > \rho$. If the function $f$ satisfies \cref{eq:QG} and $\muq > \rho$ globally (i.e., $\nu = + \infty$), the above linear convergence holds with any initial condition. Let us illustrate the linear convergence for weakly convex function in the following example.  

\begin{example}[PPM for a weakly convex function] \label{example:weakly-convex-function}
    Consider the 2-weakly convex function $f$ in \cref{example:weakly-convex}. It satisfies \cref{eq:QG} and $6 = \muq > \rho = 2$ globally. Therefore, any initial point $x_0 \in \RR$ with a positive sequence $\{c_k\}_{k\geq 0}$ such that $1/c_k > \rho$ for all $k \geq 0$ guarantees the linear convergence. 
    Let $c_k = 1/6 $ for all $k \geq 0$. For any $x_k \in \RR$, from a simple calculation, the PPM update \cref{eq:PPM} follows the formula 
    \begin{equation*}
        x_{k+1} = \argmin_{ v \in  \{v_1,v_2\}} f(v) + 3 (v-x_k)^2,
    \end{equation*}
    where 
    \begin{equation*}
        v_1 = \begin{cases}
            -0.5, &~\text{if}~\frac{3}{2} x_k \geq  -0.5 \\
            \frac{3}{2} x_k,&~\text{if}~ -1 < \frac{3}{2}x_k < -0.5 \\
            -1, &~\text{if}~\frac{3}{2} x_k \leq -1
        \end{cases}
    \end{equation*}
    and 
    \begin{equation*}
        v_2 = \begin{cases}
            \argmin_{x \in \{-1,-0.5\} } f(x) + 3(x-x_k)^2, &~\text{if}~-1 < \frac{x_k-1}{2} < -0.5 \\
            \frac{x_k-1}{2},&~\text{otherwise}.
        \end{cases}
    \end{equation*} 
    We present the numerical result with the initial point $x_0 = -2$ in \cref{figure:weakly-convex-linear}. It shows the linear convergence in both the distance to the solution set and the cost value gap.
\end{example}
\begin{figure}[t]
    \centering
    \setlength{\abovecaptionskip}{0pt}
    \begin{subfigure}[b]{0.3\textwidth}
         \centering
         \includegraphics[width=\textwidth]{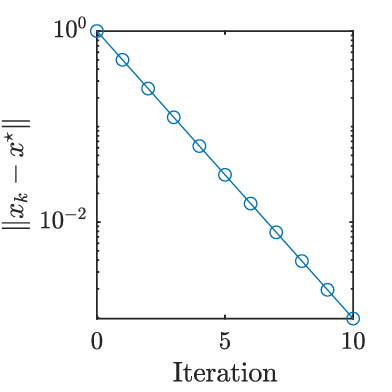}
     \end{subfigure}
     \hspace{20 mm}
     \begin{subfigure}[b]{0.3\textwidth}
         \centering
         \includegraphics[width=\textwidth]{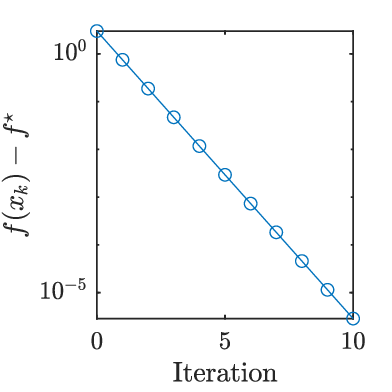}
     \end{subfigure}
    \caption{Linear convergences of the distance to the solution set (left) and cost value gaps (right) for a $2$-weakly convex function in \Cref{example:weakly-convex-function}.}
    \label{figure:weakly-convex-linear}
\end{figure}

\vspace{-1 mm}

\section{Inexact proximal point method (iPPM) and its convergence}
\label{section:inexact-PPM}
In \cref{section:sublinear-linear-PPM}, each subproblem \cref{eq:PPM-subproblem} is solved exactly. This may not be practical since one still needs an iterative solver to solve \cref{eq:PPM-subproblem}, where stopping criteria naturally introduce errors.  
We here discuss an inexact version of PPM (iPPM, \cite{rockafellar1976monotone}) where the subproblem \cref{eq:PPM} is solved inexactly. The regularity conditions in \cref{prop:QG-EB-PL} also allow us to establish linear convergences of iPPM.  
\vspace{-1mm}
\subsection{iPPM and stopping criteria} 
\label{subsection:ippm-stopping-criteria}
We replace the exact update \cref{eq:PPM}  with an inexact update
\begin{equation}
    \begin{aligned}
            \label{eq:PPM-inexact}
        \xknext \approx \text{prox}_{c_kf}(\xk).
    \end{aligned}
\end{equation}
Two classical criteria suggested in Rockafellar's seminal work \cite{rockafellar1976monotone} 
are 
\begin{align}
    \|\xknext - \text{prox}_{c_kf}(\xk)\|& \leq \epsilon_k, \quad \textstyle \sum_{k=0}^\infty \epsilon_k < \infty,
    \tag{A} \label{eq:criterion A}\\
    \|\xknext - \text{prox}_{c_kf}(\xk)\|& \leq \delta_k \| \xknext - \xk \|, \quad \textstyle \sum_{k=0}^\infty \delta_k < \infty. 
    \tag{B}
    \label{eq:criterion B}
\end{align}

The inexact update \cref{eq:PPM-inexact} with \cref{eq:criterion A} or \cref{eq:criterion B} is called iPPM. The two criteria are not implementable as the value of $\text{prox}_{c_kf}(\xk)$ is unknown. As discussed in \cite[Proposition 3]{rockafellar1976monotone}, two implementable alternatives that imply \cref{eq:criterion A,eq:criterion B}, respectively, are 
\begin{align}
    \Dist(0,H_k(\xknext)) &\leq \epsilon_k/c_k, \quad \textstyle \sum_{k=0}^\infty \epsilon_k < \infty,
    \tag{$\text{A}^{\prime}$} \label{eq:criterion Ap}\\
       \Dist(0,H_k(\xknext))& \leq  (\delta_k/c_k) \|\xknext - \xk\|, \quad \textstyle \sum_{k=0}^\infty \delta_k < \infty,  
     \tag{$\text{B}^{\prime}$} \label{eq:criterion Bp}
\end{align}
where $H_k(x) = \partial f(x) + (x-\xk)/c_k$ is the subdifferential of $f + \|\cdot-\xk\|^2/(2c_k)$ at $x$ (since $f$ is convex by assumption).
Note that \cref{eq:criterion A,eq:criterion B} only require the inexact update $\xknext$ to stay close enough to  $\text{prox}_{c_kf}(\xk)$ with respect to the Euclidean distance, but they do not require the inexact update $\xknext$ to be within the domain of $f$, i.e., $f(\xknext)$ might be infinity.
However, the stopping criteria
\eqref{eq:criterion Ap} and 
\eqref{eq:criterion Bp} require that $x_{k+1}$ is in the domain of $f$.

\subsection{(Sub)linear convergence of iPPM}
The seminal work by Rockafellar \cite{rockafellar1976monotone} has established the asymptotic convergence of iterates for iPPM under a general setup. We state the results below, whose proof is technically involved. 
\vspace{-0.05 pt}
\begin{theorem}[Asymptotic convergence of iterates {\cite[Theorem 1]{rockafellar1976monotone}}]
\label{thm:asymptotic-convergence}
Consider a proper closed convex function $f: \RR^n \to  \overline{\RR}$. Let $\{\xk\}_{k\geq 0 }$ be any sequence by \cref{eq:PPM-inexact} under \cref{eq:criterion A} with a positive sequence $\{c_k\}_{k\geq 0 }$ bounded away from zero. Then, we have 1) the sequence $\{\xk\}_{k\geq 0 }$ is bounded if and only if there exists a solution to $0 \in \partial f(x)$, i.e., $S \neq \emptyset$; 2) if $S \neq \emptyset$, the whole sequence $\{\xk\}_{k\geq 0 }$ converges to an optimal point $x_{\infty} \in S$, i.e., $\lim_{k \to \infty} x_k = x_\infty$. 
\end{theorem}
We next establish a sublinear convergence of iPPM for cost value gaps. Our simple proof is based on the boundedness of the iterates from \Cref{thm:asymptotic-convergence} and a recent idea in \cite[Theorem 3]{lu2023unified}. We provide its proof details in \cref{section:proof-inexact-PPM}. 
\begin{theorem}[\textbf{Sublinear convergence of iPPM}]
\label{thm:sublinear-inexact}
        Let $f: \RR^n \!\to \! \overline{\RR}$ be a proper closed convex function with $S = \argmin_{x \in \RR^n}f(x)$ and $S \neq \emptyset$. 
        The iterates \cref{eq:PPM-inexact} under \cref{eq:criterion Ap} with a positive sequence $\{c_k\}_{k\geq 0}$ bounded away from zero
        converge to  $x^\star \in S$ asymptotically, and the cost value gaps 
        satisfy
        \begin{align*}
           \textstyle
           \min_{j = 0,\ldots,k}f(x_j) - f^\star \leq 
           (\Dist^2(x_0,S) + 2D\sum_{j=0}^{k-1} \epsilon_j)/(2\sum_{j=0}^{k-1}c_j),
        \end{align*}
        where $D$ is the diameter of iterates $\{\xk\}_{k \geq 0}$ which is bounded.
\end{theorem}
   {Note that \Cref{thm:sublinear-inexact,thm:asymptotic-convergence} can be viewed as the convergence counterpart for iPPM of \cref{theo:PPM-sublinear-convergence} with two major differences: 1) \Cref{thm:sublinear-inexact} deals with the best iterate,  unlike~the~last iterate in \cref{theo:PPM-sublinear-convergence} (the guarantee for the average $\bar{x}_k = \frac{1}{k} \sum_{j=1}^{k} x_j $ or weighted average 
   $\Tilde{x}_k = ({\sum_{j=0}^{k-1} c_j x_{j+1}})/({\sum_{j=0}^{k-1} c_j})$
   is also straightforward; see \cref{remark:average-iterate} in the appendix); 2) the convergence of cost values in \Cref{thm:sublinear-inexact} relies on the boundedness of iterates in \cref{thm:asymptotic-convergence} whose proof is technically involved \cite[Theorem 1]{rockafellar1976monotone}, while the convergence of iterates of exact PPM is established from the sublinear convergence of cost values in \cref{theo:PPM-sublinear-convergence}.} \label{subsection:linear-convergence-inexact}

 Similar to \cref{thm-linear}, linear convergence of iPPM can also be deduced when popular regularity conditions are assumed. However, the definitions of \cref{eq:SC}, \cref{eq:RSI}, \cref{eq:EB}, \cref{eq:PL}, and \cref{eq:QG} in \cref{section:equivalent-conditions} are stated over a sublevel set $[f \leq f^\star + \nu]$ with $\nu > 0$, which is convenient to estimate the iteration number to enter a certain region, such as the constant $k_0$ in \cref{thm-linear}; but those definitions become too stringent in the analysis. Indeed,  note that the proof in \cref{thm-linear} requires the next point $\xknext$ remains inside the \cref{eq:QG} or \cref{eq:PL} region. As the stopping criteria \cref{eq:criterion A,eq:criterion B} allow infeasible points, it is possible that $\xknext$ is infeasible (i.e., $f(\xknext) = +\infty$), making the same proof fail. However, the iterates are still approaching an optimal solution and the inequality $  \muq/2\cdot \Dist(\xknext,S) \leq f(\xknext) - f^\star$ with any $\muq > 0$ still holds automatically.
 
 To state the linear convergence result for iPPM, we here modify the definition of \cref{eq:QG} from a sublevel set to a neighborhood of an optimal solution. Precisely, we say $f$ satisfies QG at $x^\star \in S$, if there is a constant $\muq > 0$ and a neighborhood $\mathcal{U} \subseteq \RR^n$ containing $x^\star$ such that
\begin{equation}
\label{eq:QG-ball} \frac{\muq}{2} \cdot \Dist^2(x,S)\leq f(x) - f^\star, \quad \forall x \in \mathcal{U}.    
\end{equation}
Notice that we can also redefine RSI, EB, and PL using a neighborhood (similar to \cref{eq:QG-ball}) and show the equivalence between them. However, we note that the neighborhood $\mathcal{U}$ for different regularity conditions may be different. For clarity, we present our final technical result that shows the linear convergence of iPPM under \cref{eq:QG-ball}, which is the counterpart of \cref{thm-linear}. 

\begin{theorem}[\textbf{Linear convergence of iPPM}]
    \label{thm-linear-inexact}
     Let $f :  \RR^n  \to  \overline{\RR}$ be a proper closed convex function
     with 
     $S = \argmin_{x \in \RR^n}f(x)$ and $S\neq \emptyset$. Suppose $f$ satisfies \cref{eq:QG-ball} at all $x^\star \in S$. Let $\{\xk\}_{k \geq 0}$ be any sequence generated by iPPM \cref{eq:PPM-inexact} under \cref{eq:criterion A,eq:criterion B} with a positive sequence $\{c_k\}_{k \geq 0}$ bounded away from zero. Then,
    there exist a nonnegative $\theta_k < 1$ and a large $\bar{k} >0$ such that
    \begin{equation*}
        \label{eq:iPPM-linear-convergence}
        \Dist(\xknext,S)  \leq \hat{\theta}_k \Dist(\xk,S),\; \forall k \geq \bar{k},
    \quad \text{where}\quad
           \hat{\theta}_k = \frac{ \theta_k +2\delta_k}{1-\delta_k}<1.
    \end{equation*}
\end{theorem}

To prove \cref{thm-linear-inexact}, we first introduce an important inequality characterizing the one-step improvement, which was first established in \cite[Equation 2.7]{luque1984asymptotic}. We provide some details in \cref{Appendix-subsection-lemma-one-step} for completeness. 
\begin{lemma} [Inexact one-step improvement {\cite[Equation 2.7]{luque1984asymptotic}}]
    \label{lemma:inexact-PPM-property-1}
     Let $f\!:\! \RR^n \!\to \! \overline{\RR}$ be a proper closed convex function
     with 
     $S = \argmin_{x \in \RR^n}f(x)$ and $S\neq \emptyset$. Let $\{\xk\}_{k \geq 0}$ be any sequence by the iPPM \cref{eq:PPM-inexact} under \cref{eq:criterion B} with the positive sequence $\{c_k\}_{k\geq 0}$ bounded away from zero.~Then there exists a $\hat{k}\geq 0$ such that 
    \begin{align}
    \label{eq:inexact-PPM-distance-error-app}
        (1-\delta_k ) \Dist(\xknext,S) \leq 2\delta_k \Dist(\xk,S) + \Dist(\mathrm{prox}_{c_kf}(\xk),S), \; \forall k \geq  \hat{k},
    \end{align}    
    where $\delta_k < 1$. 
\end{lemma}

\noindent \textbf{Proof of \cref{thm-linear-inexact}:}
Thanks to \cref{eq:criterion A}, the convergence of $\{\xk\}_{k\geq 0}$ is guaranteed by \cref{thm:asymptotic-convergence}, i.e., the sequence $\{\xk\}_{k\geq 0}$ converges to an optimal solution $x^\star$.
 Let $\muq$ and $\mathcal{U}$ be the positive constant and the neighborhood that satisfy  
\begin{equation}
    \label{eq:QG-ball-appendix}
    \frac{\muq}{2}\cdot \Dist^2(x,S)\leq f(x) - f^\star, \quad \forall x \in \mathcal{U}  
\end{equation}
with $x^\star \in \mathcal{U}.$  Let $r$ be the largest radius such that $\mathbb{B}_{\mathbb{R}^n}(x^\star,r) \subseteq \mathcal{U}$. Since $\{\xk\}_{k\geq 0}$ converges to $ x^\star$ and stopping criterion \cref{eq:criterion A} is implemented, there exist two integers $k_1,k_2 \geq 0$ such that 
 \begin{equation*}
 \begin{aligned}
     \|\xk-x^\star\| <  \frac{r}{2},& \; \forall k \geq k_1,~\text{and}~
     \|x_{k+1} - \prox_{c_k f}(\xk)\| < \frac{r}{2},\; \forall k \geq k_2.
 \end{aligned}
 \end{equation*}
 Therefore, for all $k \geq \max\{k_1,k_2\}$, we have
 \begin{equation*}
     \|\prox_{c_k f}(\xk) - x^\star\| \leq \|\prox_{c_k f}(\xk) - \xknext\| + \|\xknext - x^\star \| < r,
 \end{equation*}
which implies the point $\prox_{c_k f}(\xk)$ satisfies \cref{eq:QG-ball-appendix}. The same analysis in \cref{thm-linear} shows
\begin{equation}
     \label{eq:inequality-QG-iPPM}
        \Dist(\text{prox}_{c_k f}(\xk),S) \leq  \theta_{k} \Dist(\xk,S),\; \forall k \geq \max\{k_1,k_2\},
 \end{equation}
 where $\theta_{k} = 1/\sqrt{c_k \mu_{\mathrm{q}} +1}  < 1$.  On the other hand, thanks to \cref{eq:criterion B,lemma:inexact-PPM-property-1}, there exists a $k_3 > 0$ such that 
    \begin{equation}
            \label{eq:inexact-PPM-distance-error}
    (1-\delta_k ) \Dist(\xknext,S) \leq 2\delta_k \Dist(\xk,S) + \Dist(\text{prox}_{c_k f}(\xk),S), \; \forall k \geq  k_3,
    \end{equation}
Consequently, combining \cref{eq:inequality-QG-iPPM} with \cref{eq:inexact-PPM-distance-error} yields $$\Dist(\xknext,S) \leq \frac{\theta_k+2\delta_k}{1-\delta_k} \Dist(\xk,S), \quad \forall k \geq \max\{k_1,k_2,k_3\}.$$
Finally, as  we know  $\delta_k \to 0$ and 
$ \theta_k < 1, $ for all $ k\geq \max\{k_1,k_2\},$ 
there exists a $\bar{k} \geq \max\{k_1,k_2, k_3\}$ such that 
$$\frac{\theta_k + 2 \delta_k}{1-\delta_k} < 1, \quad \forall k \geq \bar{k}.$$ This completes the proof. \qed

    In \cref{thm-linear-inexact}, criterion \cref{eq:criterion A} serves to guarantee that the iterates can reach the neighborhood $\mathcal{U}$ in \cref{eq:QG-ball} (cf.~the asymptotic convergence from \cref{thm:asymptotic-convergence}). If  \cref{eq:QG-ball} holds globally (i.e., $r =+\infty $), the same linear convergence result holds with \cref{eq:criterion B} only. Note that the convergence proof in \cref{thm-linear-inexact} is very modular. Indeed, thanks to the regularity conditions (i.e., \cref{eq:QG-ball}), our proof appears simpler and less conservative than typical proofs in the literature; see the discussions after \Cref{thm-linear}. 

\section{Applications}
\label{section:applications} 

In this section, we present numerical experiments demonstrating the linear convergence of PPM. Throughout the experiment, we use the modeling package YALMIP \cite{Lofberg2004} to formulate the subproblem \cref{eq:PPM} and call the conic solver MOSEK \cite{mosek} to solve it. Our code is available at 
\begin{center}
    \url{https://github.com/soc-ucsd/PPM_examples}
\end{center}

We consider three different applications of convex optimization in machine learning and signal processing: linear support vector machine (SVM) \cite{zhang2015stochastic}, lasso \cite{tibshirani1996regression}, and elastic-net \cite{zou2005regularization}, respectively. 
The problem formulations are
\begin{itemize}
    \item Linear SVM:
    \vspace{-2 mm}
\begin{equation*}
        \min_{x\in \RR^n}\; \frac{1}{n} \sum_{i = 1}^n \max \{0,1 - b_i(a_i^\tr x)\} + \frac{\rho}{2} \|x\|^2,
\end{equation*}
where $\rho > 0, a_i \in \RR^d, b_i \in \{-1,1\}, i = 1, \ldots, n.$
    \item Lasso ($\ell_1$-regularization):
     \vspace{-2 mm}
    \begin{equation*}
          \min_{x\in \RR^m}\quad \frac{1}{2} \| y - Ax \|^2 + \lambda \|x\|_{1},
    \end{equation*}
    where $A \in \RR^{n\times m}, y \in \RR^m ,$ and $ \lambda > 0$. 
    \item Elastic-Net ($\ell_1-\ell_2^2$-regularization):
     \vspace{-2 mm}
     \begin{equation*}
        \min_{x\in \RR^m}\quad  \frac{1}{2}\| y - Ax \|^2 + \lambda \|x\|_{1}+ \frac{\mu}{2}\|x\|^2,
    \end{equation*}
    where $A \in \RR^{n\times m}, y \in \RR^m ,$ and $ \lambda, \mu > 0$.
\end{itemize}

For linear SVM, we consider three datasets (``a1a'', ``Australian'', ``fourclass'') from \cite{chang2011libsvm}. The regularization term is set as $\rho = 1.$ The step size parameter of the PPM is chosen as $c_k = 1$ for all $k\geq 1$. For Lasso, we randomly generate $A \in \RR^{n \times m}$ and $ \hat{x} \in \RR^{m}$ with $s$ elements being zero ($s < m$), then construct the vector $y = A\hat{x}$. We consider problems with three different sizes: $(n=10,m=40,s=5),(n=20,m=50,s=10),$ and $(n=30,m=60,s=15) $. The regularization terms are set as $\rho = 10$. The step size parameter of the PPM is chosen as $c_k = 0.16$ for all $k\geq 1$. For Elastic-Net, we consider the same data in Lasso with the regularization terms $\rho =10$ and $\mu = 1.$ 
These problems satisfy the regularity conditions in \Cref{thm-linear}. For each application, we run the PPM on three different data sets. The numerical results are shown in \cref{figure:svm-cost-gap}, which confirms the fast linear convergence of the PPM. 

\vspace{-2mm}

\begin{figure}[t]
\setlength{\abovecaptionskip}{1pt plus 0.1pt minus 0.1pt}
    \centering
    \includegraphics[width=\textwidth]{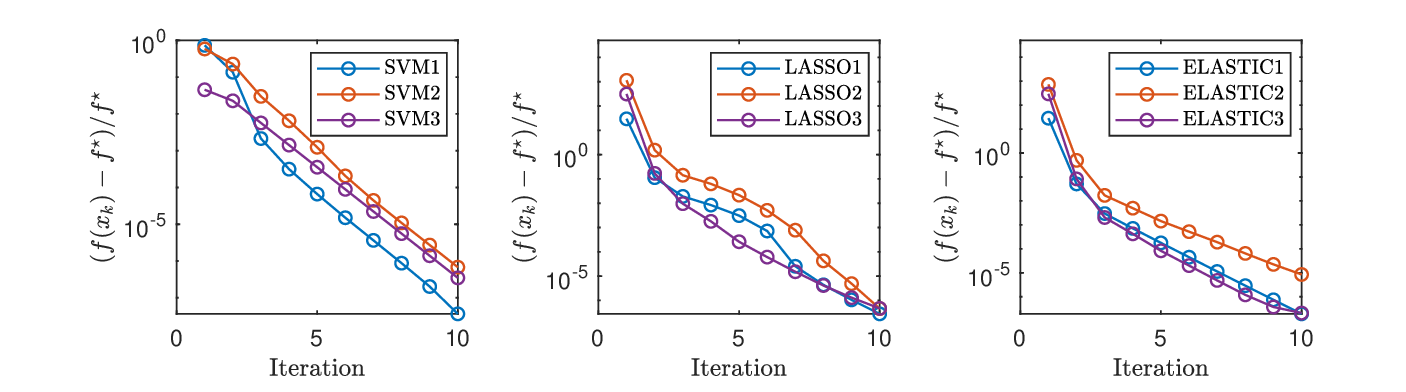}
    \caption{Linear convergences of cost value gaps for linear SVM (left), lasso (middle), and elastic-net (right).}
    \label{figure:svm-cost-gap}
\end{figure}

\section{Conclusion}
\label{section:conclusion}
In this paper, we have established the relationship between different regularity conditions under the class of $\rho$-weakly convex functions (see \cref{prop:QG-EB-PL}). The result shows that, in the class of $\rho$-weakly convex functions, {strong convexity} implies {restricted secant inequality}, which in turn implies {error bound}. Meanwhile, {error bound} and {Polyak-Łojasiewicz inequality} are equivalent, and any of them implies {quadratic growth}. If a $\rho$-weakly convex function satisfies {quadratic growth} and the quadratic growth constant satisfies $\muq > \rho$, then all these four properties: {restricted secant inequality}, {error bound}, {Polyak-Łojasiewicz inequality}, and {quadratic growth}, become equivalent. Unlike previous results in the literature, our result goes beyond smoothness and convexity, benefiting the analysis  of various iterative algorithms. 

We have also presented simple and modular proofs for the exact PPM for both convex and weakly convex functions which makes the analysis of the exact PPM more accessible (see \cref{thm-linear,thm:linear-weakly}). In the setting of inexact PPM, we first clarified the subtly in terms of inexactness and infeasibility, and then established the linear convergence of inexact PPM when the inexactness is controlled properly for convex optimization (see \cref{thm-linear-inexact}). 
The numerical experiments also validate our theoretical findings. We believe these results on (inexact) PPM will facilitate algorithm development in nonsmooth optimization, and we are particularly interested in further applications in large-scale conic optimization \cite{zheng2021chordal,zheng2020chordal}.

\appendix
\begin{appendices}
\section{Subdifferential characterization of weakly convex functions} \label{appendix:background}
For self-completeness, we review a useful subdifferential characterization of weakly convex functions. Recall that $\overline{\RR}$ denotes the extended real line, i.e., $\overline{\RR}:= \RR \cup \{\pm \infty\}$. 
    
    We say that a function $f: \RR^n \to \overline{\RR}$ is proper if $f(x) < \infty$ for at least one $x \in \RR^n$ and $-\infty < f(x)$ for all $x \in \RR^n.$     We say that a function $f: \RR^n \to \overline{\RR}$ is closed if the epigraph $\text{epi}f :\{ (x,\alpha) \in \RR^n \times \RR \mid f(x)\leq \alpha \}$ is closed.

\begin{definition}
    For a closed function $f: \RR^n \to \overline{\RR}$ and a point $x$ such that $f(x)$ is finite, we define the Fr\'echet subdifferential {at the point $x$ as}  
    $$
    \fpartial f(x) = \left \{ s \in \RR^n \mid \liminf_{y \to x }  \frac{ f(y) - f(x) -  \innerproduct{s}{y-x}}{\|y-x\|} \geq 0 \right\}.
    $$ 
\end{definition}

\begin{definition}
    A function $f:\RR^n \to \overline{\RR}$ is called $\rho$-weakly convex if $f + \frac{\rho}{2}\|\cdot\|^2$ is convex.
\end{definition}
For this class of $\rho$-weakly convex functions, the Fr\'echet subdifferential always exists for any point $x \in \RR^n$. This is because the Fr\'echet subdifferential is the same as the Clarke subdifferential for $\rho$-weakly convex functions and the Clarke subdifferential always exists \cite[Section 3 and fact 5]{li2020understanding}.
The class of $\rho$-weakly convex functions also has the following nice characterizations. These characterizations offer favorable properties for  $\rho$-weakly convex functions, which can be viewed as extensions from strongly convex functions. 
\begin{lemma}[Subdifferential characterization {\cite[Lemma 2.1]{davis2019stochastic}}]
    The following statements are equivalent for any closed and proper function $f:\RR^n \to \overline{\RR}$.
    \begin{itemize}
        \setlength\itemsep{0.1 pt}
        \item The function $f$ is $\rho$-weakly convex.
        \item The approximate secant inequality holds:
        \begin{equation*}
        \begin{aligned}
            f(\lambda x + (1-\lambda )y )\leq \lambda f(x) + (1-\lambda) f(y) +&\frac{\rho  \lambda (1-\lambda)}{2}\|x-y\|^2, \; \forall x,y \in \RR^n, \lambda \in [0,1].
            \end{aligned}
        \end{equation*}
        \item The subgradient inequality holds:
        \begin{equation} \label{eq:subdifferential-inequality-weakly-convex}
            f(y) \geq f(x) + \innerproduct{v}{y-x} - \frac{\rho}{2}\|y-x\|^2, \quad \forall x,y \in \RR^n, v \in \fpartial f(x).
        \end{equation}
        \item The subdifferential map is hypomonotone:
        \begin{equation*}
            \innerproduct{v-w}{x-y} \geq - \rho \|x-y\|^2, \quad \forall x, y \in \RR^n, v \in \fpartial f(x),~ \text{and}~w \in \fpartial f(y).
        \end{equation*}
    \end{itemize}
    If $f$ is $\mathcal{C}^2$-smooth, then the four properties above are all equivalent to 
    \begin{equation*}
        \nabla^2 f(x)  \succeq -\rho I, \forall x \in \RR^n.
    \end{equation*}
\end{lemma}
The subgradient inequality \cref{eq:subdifferential-inequality-weakly-convex} is particularly useful in our proof for \cref{prop:QG-EB-PL}. The proof repeatedly applies \cref{eq:subdifferential-inequality-weakly-convex} to establish the connection between subdifferential and cost value~gap.

\section{Proofs in \Cref{section:Motivation-Preliminaries}} \label{appendix:Section-2}

\subsection{Proof of \Cref{proposition:GD-linear-convergence}} \label{appendix:proof-GD}
Recall that we say a differentialable function $f:\RR^n \to \RR$ is $L$-smooth if it satisfies 
\begin{equation}
\label{eq:L-smooth-definion}
 \|\nabla f(x) - \nabla f(y)\| \leq L \|x  - y \|,\;\; \forall x, y \in \RR^n.   
\end{equation}
A differentialable $L$-smooth function $f$ also satisfy the following useful inequalities 
\begin{equation}
        \label{eq:Lispchitz-upper}
        f(y) \leq f(x) + \innerproduct{\nabla f(x)}{y-x} +\frac{L}{2}\|y-x\|^2, \;\; \forall x, y \in \RR^n. 
\end{equation}
Assume that RSI \cref{eq:RSI-smooth} holds with constant $\mur > 0$. We first note that $\mur \leq L$. Indeed, this can be seen from the following fact: let $x \in \RR^n, x \notin S, \hat{x} \in \Pi_{S}(x)$, we have
\begin{align*}
\mur \cdot \Dist^2(x,S)= \mur \|x-\hat{x}\|^2 & \leq \innerproduct{\nabla f(x)}{x-\hat{x}} =\innerproduct{\nabla f(x) - \nabla f(\hat{x})}{x-\hat{x}}  \\
& \leq L\|x-\hat{x}\|^2 = L \cdot \Dist^2(x,S),
\end{align*}
where the first inequality uses RSI \cref{eq:RSI-smooth}, and the last inequality applies Cauchy-Schwarz and \cref{eq:L-smooth-definion}.

Recall that for a closed set $S \subseteq \RR^n$, we denote the distance of a point $x\in \RR^n$ to $S$ as $\Dist(x,S) := \min_{y \in S} \|x - y\|$ and the projection of $x$ onto $S$ as $\Pi_{S}(x) = \argmin_{y\in S}\|x - y\|$. In the following, we use $S$ to denote the set of minimizers of $f$ and let $\hat{x}_k$ be an element in $ \Pi_{S}(\xk)$. 
We first have the following inequality in terms of the distance to the solution set 
\begin{subequations}
\begin{align}
    \Dist^2(\xknext,S) & \leq \|\xk - t_k \nabla f(\xk) - \hat{x}_k\|^2 \label{eq:iterate-decrease-a} \\ 
    & = \|\xk - \hat{x}_k\|^2- 2t_k \innerproduct{\nabla f(\xk)}{\xk - \hat{x}_k} + t_k^2\|\nabla f(\xk)\|^2 \label{eq:iterate-decrease-b} \\
    & \leq \Dist^2(\xk,S) - 2 t_k \mur \Dist^2(\xk,S) + t_k^2 L^2 \Dist^2(\xk,S)  \label{eq:iterate-decrease-c}\\
    & = \left(1- 2 t_k \mur + t_k^2 L^2\right)\Dist^2(\xk,S), \label{eq:iterate-decrease-d}
\end{align}
\end{subequations}
where \cref{eq:iterate-decrease-a} plugs in definition of $\xknext$ and uses the fact $\Dist(\xknext,S)  \leq \|\xknext - \hat{x}_k\| $, \cref{eq:iterate-decrease-b} simply expends the square, and  \cref{eq:iterate-decrease-c} uses \cref{eq:RSI-smooth} to upper bound the inner product term and applies the $L$-smoothness inequality \cref{eq:L-smooth-definion} to $\|\nabla f(\xk)\| = \|\nabla f(\xk) -  \nabla f(\hat{x}_k) \| $ $ \leq L \| \xk - \hat{x}_k \|$ (noting that $\nabla f(\hat{x}_k) = 0$). 

On the other hand, suppose the PL property \cref{eq:PL-smooth} holds with constant $\muq > 0$.
First, we have 
\begin{align*}
     f(\xknext) -f(\xk) &\leq  \innerproduct{\nabla f(\xk)}{\xknext-\xk} +\frac{L}{2}\|\xknext-\xk\|^2 \nonumber \\
     &=  -t_k\innerproduct{\nabla f(\xk)}{\nabla f(\xk)} + \frac{Lt_k^2}{2}\|\nabla f(\xk)\|^2 \nonumber  \\
     & = \left( \frac{-2t_k+Lt_k^2}{2}\right)\|\nabla f(\xk)\|^2.
\end{align*}
where the first inequality applies  \cref{eq:Lispchitz-upper}. If the step size $t_k$ is chosen such that $ 0< t_k< \frac{2}{L}$, then $ \frac{-2t_k+Lt_k^2}{2} < 0$.  Then, applying PL \cref{eq:PL-smooth} to the above inequality and subtracting $f^\star$ from both sides, we arrive at the linear convergence in the cost value gap 
\begin{equation}
    \label{eq:linear-decrease-cost}
    f(\xknext) - f^\star \leq (1+\left(-2t_k+Lt_k^2\right) \mup) (f(\xk) - f^\star).
\end{equation}

To summarize, if we choose the step size $t_k$ such that the constants in \cref{eq:iterate-decrease-d,eq:linear-decrease-cost} less than one (equivalently, $ 0 \leq 1- 2 t_k \mur + t_k^2 L^2 <1$ and $0< t_k< \frac{2}{L} $), linear convergence for both the distance and the cost value gap is guaranteed.
In particular, choosing $t_k = \frac{\mur}{L^2}$, we have $ 1- 2 t_k \mur + t_k^2 L^2  = 1 -  \frac{\mur^2}{L^2} < 1 $ and $ t_k \leq \frac{L}{L^2} < \frac{2}{L}$ as $\mur \leq L$, leading to the desired linear convergence:
\begin{equation*}
\begin{aligned}
    \Dist^2 (\xknext,S) & \leq (1 - \mur^2/L^2 )  \Dist^2 (\xk,S), 
    \\
    f(\xknext) - f^\star & \leq \frac{L^3 + (- 2 \mur L + \mur^2)  \mup }{L^3} (f(\xk) - f^\star).
    \end{aligned}
\end{equation*}
Note that if \cref{eq:PL-smooth} holds with $\mup >0$, then $ \mup \leq \frac{L^3}{2\mur L - \mur^2}$. Otherwise, the above cost value decrease would contradict the fact that the cost value gap is always nonnegative.
This choice of step size $t_k = \frac{\mur}{L^2}$ is not new, which was used  in \cite[Proposition 1]{guille2022gradient} or implicitly \cite[Proposition 1]{zhang2020new}. 

\begin{remark}
Note that \cref{proposition:GD-linear-convergence}
is consistent with the linear convergence result in \cite[Theorem 1]{karimi2016linear} since the step size $t_k = \frac{1}{L}$ also satisfies $0 < t_k < \frac{2}{L}$. Despite choosing $t_k = \frac{1}{L}$ renders a cleaner reduction constant $(1-\frac{\mur}{L})$ in \cite[Theorem 1]{karimi2016linear}, it is unclear if the distance also converges linearly under this stepsize choice (i.e., it is not guaranteed that $t_k = \frac{1}{L}$ leads to the constant $1- 2 t_k \mur + t_k^2 L^2 = 2 - 2 \frac{\mur}{L}<1$).
\end{remark}

\subsection{Proof of \Cref{lemma:slope-subdifferential}} \label{appendix:proof-lemma-2.1}
We note that the following inequality holds for any $\bar{x} \in \RR^n$ with a finite function value $f(\bar{x})$ \cite[Section 2]{drusvyatskiy2021nonsmooth}
\begin{equation}
    \label{eq:slope-Frechet-subdifferential}
    |\nabla f|(\bar{x}) \leq \Dist (0,\fpartial f(\bar{x}) ).
\end{equation}
As the slope lacks basic lower-semicontinuity properties, it is important to define the \textit{limiting slope}
\begin{equation}
    \begin{aligned}
        \overline{|\nabla f|}(\bar{x}) := \liminf_{x\to \bar{x},  f(x) \to f(\bar{x})} |\nabla f|(x). \label{eq:limit-slope}
    \end{aligned}
\end{equation}
The result from \cite[Proposition 8.5]{ioffe2017variational} has shown that
\begin{equation}
    \begin{aligned}
        \label{eq:limiting-slope-limiting-subdifferential}
        \overline{|\nabla f|}(\bar{x}) = \Dist(0,\partial f(\bar{x})),
    \end{aligned}
\end{equation}
where $\partial f(\bar{x})$ is the \textit{limiting subdifferential} \cite[Equation 18]{li2020understanding} at $\bar{x}$ defined as 
\begin{equation*}
    \partial f(\bar{x}) = \{s \in \RR^n \mid \exists x_k \to \Bar{x}~\text{and}~s_k \in \fpartial f(x_k)~\text{such that}~s_k \to s \}.
\end{equation*}

 Then we claim that for the class of functions whose Fr\'echet subdifferential and limiting subdifferential coincide, we have $|\nabla f|(\bar{x}) = \Dist(0,\fpartial f (\bar{x})).$
Indeed, suppose $\fpartial f(\bar{x}) = \partial f(\bar{x}) $, using \cref{eq:limiting-slope-limiting-subdifferential} and \cref{eq:slope-Frechet-subdifferential}, we have 
\begin{equation*}
    \overline{|\nabla f|}(\bar{x}) = \Dist(0,\partial f(\bar{x})) =  \Dist(0,\fpartial f(\bar{x})) \geq |\nabla f|(\bar{x}).
\end{equation*}
On the other hand, $\overline{|\nabla f|}(\bar{x}) \leq |\nabla f|(\bar{x})$ always holds true by defintion \cref{eq:limit-slope}. Thus, $ |\nabla f|(\bar{x}) = \Dist(0,\fpartial f (\bar{x}))$. 

{Below, we show that Fr\'echet subdifferential and limiting subdifferential match for 1) closed and smooth; 2) closed and convex; 3) closed and weakly convex functions. We also assume that the function is proper. We first show that the result holds for a $\rho$-weakly convex closed function $f:\RR \to \overline{\RR}$, and the result for a proper convex closed function follows by setting $\rho = 0$. By the definition of $\partial f(\bar{x})$, it is clear that $\fpartial f(\bar{x})\subseteq \partial f(\bar{x})$ (choosing $x_k = \bar{x}$ and $s_k = s \in \fpartial f(\bar{x})$). On the other hand, if $s \in  \partial f(\bar{x})$, then we can find $\{x_k\} \to \bar{x},  \{s_k\} \to s,$ and $s_k \in \fpartial f(x_k)$. Thus, for all $y \in \RR^n$, we have
\begin{align*}
\innerproduct{s}{y-\bar{x}} - \frac{\rho}{2}\|y-\bar{x}\|^2 
& = \lim_{k \rightarrow +\infty} \innerproduct{s_k}{y-x_k} - \frac{\rho}{2}\|y-x_k\|^2\\
& = \liminf_{k \rightarrow +\infty} \innerproduct{s_k}{y-x_k} - \frac{\rho}{2}\|y-x_k\|^2\\
&\overset{(a)}{\leq} 
\liminf_{k\rightarrow +\infty} f(y) - f(x_k)\\ &\overset{(b)}{\leq}
f(y) - f(\bar{x}),
\end{align*}
where $(a)$ applies the subgradient inequality \cref{eq:subdifferential-inequality-weakly-convex}, and $(b)$ uses the closeseness of $f$ (i.e., lower semicontinuity $\liminf_{y \to \bar{x}} f(y) \geq f(\bar{x})$). Thus, Fr\'echet subdifferential and limiting subdifferential match for $\rho$-weakly convex closed functions.

For a smooth function $f$, it is also clear that $\{\nabla f(\bar{x})\} \subseteq \partial f(\bar{x})$ since $\{\nabla f(\bar{x})\} \subseteq \fpartial f(\bar{x}) \subseteq \partial f(\bar{x})$. On the other hand, let $s \in \partial f(\bar{x})$. We have 
$    x_k \to \bar{x} $ and $ \{\nabla f(x_k)\} \to s.   $
Since $f$ is smooth, the gradient $\nabla f$ is continuous, implying $\{\nabla f(x_k)\} \to \nabla f(\bar{x})$. Thus, we have $s = \nabla f(x)$. This proves that $\{\nabla f(\bar{x})\} = \fpartial f(\bar{x}) = \partial f(\bar{x})$. Thus, the result follows.}

\section{Convergence proofs of the PPM in \cref{subsection:sublinear-exact}}
\label{section:sublinear-PPM-proof}

\subsection{A compelete proof for \cref{thm-linear}}
\label{proof-thm-linear}
To finish the proof for \cref{eq:iterate-linear} with coefficient $\theta_k \leq 1/{\sqrt{c_k^2/\mue^2 + 1}}$. We first review a crucial but standard result about the proximal mapping.
\begin{lemma}[{\cite[Equation 3.1]{leventhal2009metric}}]
\label{lemma:firmly-monotone-conseuqence}
For any $c_k > 0$ in \cref{eq:PPM}, the following holds true
\begin{align}
    \Dist^2(\xk,S)  \geq \|\xknext - \Pi_{S}(\xk) \|^2 + \|\xk - \xknext\|^2 \label{eq:firmly-nonexpensive}.
\end{align}
\end{lemma}
\begin{proof}
As shown in \cite{rockafellar2021characterizing}, the proximal point operator for a convex function $f:\RR^n \to \overline{\RR}$ with $\alpha > 0$ is \textit{firmly nonexpensive}, i.e., $\forall x,y \in \RR^n,$
\begin{align}
    \|\text{prox}_{\alpha f}(x)- \text{prox}_{\alpha f}(y)\|^2 \leq \innerproduct{x-y}{\text{prox}_{\alpha f}(x)- \text{prox}_{\alpha f}(y)}.
    \label{eq:firmly-nonexpensive-general}
\end{align}
For notational convenience, let $P(x):= \text{prox}_{\alpha f}(x).$ We have 
\begin{align*}
    \|x  - y\|^2 & = \|P(x) + ( x -P(x) ) -( P(y) + ( y -P(y) ) )\|^2 \\
    & = \|P(x) -P(y)+ ( x -P(x) ) - ( y -P(y) ) \|^2 \\
    & = \|P(x) -P(y)\|^2 + \|( x -P(x) ) - ( y -P(y) )\|^2 \\
    &\quad  + 2\innerproduct{P(x) -P(y)}{( x -P(x) ) - ( y -P(y) )} \\
    & \geq \|P(x) -P(y)\|^2 + \|( x -P(x) ) - ( y -P(y) )\|^2,
\end{align*}
where the last inequality comes from the fact that the inner product is nonnegative by \cref{eq:firmly-nonexpensive-general}.

Setting $x = \xk $ and $y = \Pi_{S}(\xk)$, we have $\xknext = P(\xk)$, and $y = P(y)$, which leads to the desired result in  \cref{eq:firmly-nonexpensive}.
\end{proof}
We are ready to finish the proof of \cref{thm-linear}. With \cref{lemma:firmly-monotone-conseuqence}, it follows that
\begin{align*}
    \Dist^2(\xknext,S) & \leq \|\xknext - \Pi_{S}(\xk)\|^2 \\
     & \leq \|\xk - \Pi_{S}(\xk)\|^2  - \|\xk - \xknext\|^2 \\
     & \leq \Dist^2(\xk,S) - c_k^2\Dist^2(0,\partial f(\xknext)) \\
     & \leq \Dist^2(\xk,S) - \frac{c_k^2}{\mue^2} \Dist^2(\xknext,S),
\end{align*}
where the second inequality is from \cref{eq:firmly-nonexpensive}, the third inequality comes from the optimality condition \cref{eq:optimality-condition}, and the last inequality uses \cref{eq:EB}.
Rearranging terms, we have 
\begin{align*}
    \Dist^2(\xknext,S) \leq \frac{\mue^2}{c_k^2 + \mue^2} \Dist^2(\xk,S).
\end{align*}
Taking a square root yields the desired result. 

\section{Convergence proofs for the inexact PPM in \cref{section:inexact-PPM}}
\label{section:proof-inexact-PPM}

\subsection{Proofs of \cref{thm:sublinear-inexact}}
   The optimality condition of \cref{eq:PPM-inexact} and criteria \cref{eq:criterion Ap} implies that there exist $v_k \in \partial f(\xknext)$ and  $\theta_k \in \RR^n$ such that
    \begin{align*}
        0 =  v_k + (\xknext - \xk +  \theta_k)/c_k,\quad \|\theta_k\| \leq \epsilon_k.  
    \end{align*}
      By the definition of subdifferential for convex functions, we know
    \begin{align}
        f(x^\star) 
        & \geq f(\xknext) + \left\langle -\frac{1}{c_k} (\xknext - \xk + \theta_k),  x^\star - \xknext \right\rangle \nonumber  \\
        & \geq  f(\xknext) + \frac{1}{c_k}\left\langle  \xk - \xknext ,  x^\star - \xknext \right\rangle  - 
        \frac{1}{c_k}
        \|\epsilon_k\| \|x^\star - \xknext\|.\label{eq:subgradient-inequality}
    \end{align}
    On the other hand, we have 
    \begin{align}
         \|\xknext - x^\star \|^2
         & = \|\xk  - x^\star \|^2 - \|\xk-\xknext\|^2 + 2 \innerproduct{\xknext - \xk}{\xknext - x^\star} \nonumber\\
        & \leq \|\xk - x^\star \|^2 + 2 \innerproduct{\xknext - \xk}{\xknext - x^\star} ,\label{eq:progress-iterate-1}
    \end{align}
where the first identity follows from simple algebraic manipulations $\|\xk - x^\star \|^2 = \|\xk - \xknext + \xknext - x^\star \|^2 = \|\xk - \xknext\|^2 - 2 \langle \xknext - \xk, \xknext - x^\star \rangle + \|\xknext - x^\star \|^2$, and the second inequality drops a nonnegative term. Combining \eqref{eq:subgradient-inequality} with \eqref{eq:progress-iterate-1}  leads to 
    \begin{align*}
        \|\xknext - x^\star \|^2 \leq \|\xk - x^\star \|^2  - 2 c_k ( f(\xknext)-  f(x^\star)) + 2\epsilon_k \|x^\star - \xknext\|.
    \end{align*}
       This improvement holds for all $k \geq 0$. Summing above inequality from $0$ to $k-1$ yields
    \begin{align*}
        &\|\xk - x^\star \|^2 \leq \|x_0 - x^\star\|^2 - 2 \sum_{j = 0}^{k-1} c_j (f(x_{j+1}) - f(x^\star)) +  \sum_{j = 0}^{k-1} 2\epsilon_j \|x^\star -  x_{j+1}\| 
        \end{align*}
    This immediately leads to
        \begin{align*}
          2\sum_{j=0}^{k-1} c_j  \min_{i = 0,\ldots,k}f(x_i) 
          - f(x^\star) & \leq 2\sum_{j = 0}^{k-1}c_j (f(x_{j+1}) - f(x^\star))\leq \|x_0-x^\star\|^2 + 2D\sum_{j=0}^{k-1} \epsilon_j.
    \end{align*}
    Dividing both sides by $2 \sum_{j=0}^{k-1}c_j$ and letting $x^\star = \Pi_{S}(x_0)$ gets the desired inequality. The convergence of the iterate $\{\xk\}$ and the boundedness of the diameter $D$ is guaranteed by \cref{thm:asymptotic-convergence}.

\begin{remark}[Average/weighted iterate]
\label{remark:average-iterate}
    \Cref{thm:sublinear-inexact} provides a convergence guarantee for the best iterate as opposed to that for the last iterate in \cref{theo:PPM-sublinear-convergence}. The guarantee for the average $\bar{x}_k = \frac{1}{k} \sum_{j=1}^{k} x_j $ or the weighted average $\Tilde{x}_k = \frac{\sum_{j=0}^{k-1} c_j x_{j+1}}{\sum_{j=0}^{k-1} c_j} $ can also be shown using the inequality
    \begin{equation*}
      f(\bar{x}_k) - f^\star ~\text{and}~
    f(\Tilde{x}_k) - f^\star 
    \leq  \frac{ \sum_{j=0}^{k-1} c_j (f(x_{j+1}) - f^\star)}{\sum_{j=0}^{k-1} c_j}.  
    \end{equation*}
 The proof in \cref{thm:sublinear-inexact} borrows some proof idea  from \cite[Theorem 3]{lu2023unified} with the fact that \cref{eq:criterion Ap} implies the existence of $\theta_k \in \RR^n$ such that $\Dist(0,H_k(\xknext)) =  \frac{\|\theta_k\|}{c_k}   \leq  \epsilon_k$. \qed
\end{remark}

\subsection{Proof of \cref{lemma:inexact-PPM-property-1}}
\label{Appendix-subsection-lemma-one-step}
The proof is an adaptation of \cite[Section 2]{luque1984asymptotic}. For completeness, we reprove it here. Fix a $x\in \RR^n$. From the update \cref{eq:PPM}, we know  
\begin{equation*}
    \begin{aligned}
        & f(\mathrm{prox}_{c_k f}(x)) + \frac{1}{2c_k}\| \mathrm{prox}_{c_k f}(x)- x\|^2 \leq f(\Pi_{S}(x)) +  \frac{1}{2c_k}\| \Pi_{S}(x)- x\|^2 \\
       \Longrightarrow ~ &  \frac{1}{2c_k}\| \mathrm{prox}_{c_k f}(x)- x\|^2  \leq f^\star - f(\mathrm{prox}_{c_k f}(x)) + \frac{1}{2c_k}\| \Pi_{S}(x)- x\|^2 \\
       \Longrightarrow ~ & \| \mathrm{prox}_{c_k f}(x)- x\| \leq \Dist(x,S),
    \end{aligned}
\end{equation*}
where the last implication is due to $f^\star - f(\mathrm{prox}_{c_k f}(x)) \leq 0$. On the other hand, using triangle inequality, the nonexpensiveness of projection onto a convex set, and the above inequality, we have
    \begin{align}
       \forall x \in \RR^n, ~ \|x - \Pi_{S}(\text{prox}_{c_k f}(x)) \| &  \leq \|x - \Pi_{S}(x)\| + \| \Pi_{S}(x)-\Pi_{S}(\text{prox}_{c_k f}(x))\| \nonumber \\
        & \leq  \Dist(x,S) + \| x-\text{prox}_{c_k f}(x)\|  \nonumber \\
        & \leq 2 \Dist(x,S).
        \label{eq:difference}
    \end{align}
    Also, we have
    \begin{align*}
        &~\|\xknext -\Pi_{S}(\text{prox}_{c_k f}(x)) \| \\
        \leq &~\| \xknext  -\text{prox}_{c_k f}(\xk) \| + \| \text{prox}_{c_k f}(\xk)- \Pi_{S}(\text{prox}_{c_k f}(x))\| \\
        \leq &~\delta_k \|\xknext - \xk\| + \Dist(\text{prox}_{c_k f}(\xk),S) \\
        \leq &~\delta_k \| \xknext -\Pi_{S}(\text{prox}_{c_k f}(x))  \| +  \delta_k \| \xk -\Pi_{S}(\text{prox}_{c_k f}(x))  \| \\
        & + \Dist(\text{prox}_{c_k f}(\xk),S),
    \end{align*}
    where the second inequality is from the stopping criterion \cref{eq:criterion B}. As $\delta_k \to 0$, choosing $\hat{k}$ such that $\delta_k < 1$ for all $ k \geq \hat{k},$ we have 
    \begin{align*}
        & (1-\delta_k )\|\xknext -\Pi_{S}(\text{prox}_{c_kf}(x)) \| \\ 
        \leq~& \delta_k \| \xk -\Pi_{S}(\text{prox}_{c_k f}(x))  \| + \Dist(\text{prox}_{c_k f}(\xk),S).
    \end{align*}
    Finally, using the fact $\Dist(\xknext,S) \leq \|\xknext -\Pi_{S}(\text{prox}_{c_k f}(x)) \|$ and \cref{eq:difference} for $x = \xk$ completes the proof.

\end{appendices}

\section*{Declarations}

\bmhead{Conflict of interest}
The authors have no conflict of interest to disclose.
\bmhead{Ethics approval and consent to participate} 
This research does not involve any study of humans or animals.
\bmhead{Code availability}
Our open-source implementation is available at \url{https://github.com/soc-ucsd/PPM_examples}.

\bibliographystyle{unsrt}
\bibliography{reference}

\end{document}